\documentclass[pdflatex,sn-mathphys-num]{sn-jnl}

\usepackage{graphicx}%
\usepackage{multirow}%
\usepackage{amsmath,amssymb,amsfonts}%
\usepackage{amsthm}%
\usepackage{mathrsfs}%
\usepackage[title]{appendix}%
\usepackage{xcolor}%
\usepackage{textcomp}%
\usepackage{manyfoot}%
\usepackage{booktabs}%
\usepackage{algorithm}%
\usepackage{algorithmicx}%
\usepackage{algpseudocode}%
\usepackage{listings}%
\usepackage{tikz}
\usetikzlibrary{arrows, arrows.meta}
\usepackage{multicol}
\usepackage{booktabs}
\usepackage{pifont}

\theoremstyle{thmstyleone}%
\newtheorem{theorem}{Theorem}
\newtheorem{proposition}[theorem]{Proposition}%
\newtheorem{corollary}[theorem]{Corollary}%
\newtheorem{lemma}[theorem]{Lemma}%

\theoremstyle{thmstyletwo}%
\newtheorem{example}{Example}%
\newtheorem{remark}{Remark}%

\theoremstyle{thmstylethree}%
\newtheorem{definition}{Definition}%

\def\RR{\mathbb{R}}

\def\sii{\Leftrightarrow}

\def\II{I}
\def\LL{\mathcal{L}(\mathbb{R})}
\def\LLposit{\mathcal{L}(\mathbb{R}^+)}

\def\LR{\mathcal{L}(\RR)}

\def\colec{[\unda_1,\ova_1],\dots,[\unda_n,\ova_n]}
\newcommand{\reales}{\mathbb{R}}
\newcommand{\naturales}{\mathbb{N}}
\newcommand{\infII}{\mathbf{0}_{\II}}
\newcommand{\supII}{\mathbf{1}_{\II}}
\newcommand{\Ldinter}{\mathcal{L}(\II)}

\newcommand{\lator}{\preceq_{Lo}}
\newcommand{\ador}{\preceq_{ao}}
\newcommand{\lexunor}{\preceq_{Lex1}}
\newcommand{\lexdosor}{\preceq_{Lex2}}
\newcommand{\xuyor}{\preceq_{XY}}
\newcommand{\preab}{\preceq_{\alpha \beta}}

\newcommand{\unda}{\underline{a}}
\newcommand{\ova}{\overline{a}}
\newcommand{\undb}{\underline{b}}
\newcommand{\ovb}{\overline{b}}
\newcommand{\vecas}{(a_1,\dots,a_n)}

\newcommand{\maxto}{Max_{\preceq}}
\newcommand{\mini}{Min}
\newcommand{\maxi}{Max}
\newcommand{\infto}{Inf_{\preceq}}
\newcommand{\supto}{Sup_{\preceq}}
\newcommand{\inflo}{Inf_{Lo}}
\newcommand{\suplo}{Sup_{Lo}}
\newcommand{\minlexuno}{Min_{Lex1}}
\newcommand{\mincq}{Min_{\preceq}}

\newcommand{\setas}{\{a_1,\dots,a_n\}}
\newcommand{\al}{\alpha}

\newcommand{\terv}[3]{[#1_{#3},#2_{#3}]}

\definecolor{morado}{rgb}{0.3, 0, 0.7}

\definecolor{verdeos}{rgb}{0, .4, 0}

\newcommand{\cmark}{\color{green!40!gray}\ding{51}}%
\newcommand{\xmark}{\color{red}\ding{55}}%
\begin{document}

\title[On the formal definition of average function for interval data]{On the formal definition of average function for interval data}


\author[1]{\fnm{Sergio} \sur{F. Alonso}}\email{fernandezalosergio@uniovi.es}
\equalcont{These authors contributed equally to this work.}

\author[1]{\fnm{Susana} \sur{Montes}}\email{montes@uniovi.es}
\equalcont{These authors contributed equally to this work.}

\author*[1]{\fnm{Susana} \sur{Diaz-Vazquez}}\email{diazsusana@uniovi.es}
\equalcont{These authors contributed equally to this work.}

\affil*[1]{\orgdiv{Dept. Statistics and O. R. and M. T.}, \orgname{University of Oviedo}, \orgaddress{\street{Federico Garcia Lorca, 18}, \city{Oviedo}, \postcode{33007},  \country{Spain}}}


\abstract{
This paper addresses the formal definition of average functions for interval data, extending classical aggregation theory beyond precise numerical values to the more general setting of real-valued intervals. Recognizing that interval data naturally arise in contexts involving imprecision or uncertainty, we examine how established notions of averages and aggregation functions can be generalized when the underlying domain is no longer restricted to the unit interval, but instead encompasses the entire real line. We analyze the impact of different admissible orders on intervals, particularly focusing on the lattice order and several total orders, and provide rigorous definitions of idempotency and monotonicity in this context.  Furthermore, we adapt and study classical families of aggregation operators such as power means, minimum, maximum, and median to the interval setting, characterizing when these operators satisfy the properties required of average functions for intervals. Our results highlight that the choice of order plays a crucial role in the behavior and suitability of these operators. This work lays a foundation for the systematic aggregation of interval-valued information, with potential applications in statistics, decision making, and fields where data imprecision is inherent.}

\keywords{aggregation function, average function, interval data, separable function}



\maketitle

\section{Introduction}\label{sec1}

The aggregation of numbers is one of the most fundamental tools in information management. Its historical roots can be traced as far back as problem 14 of the Moscow papyrus, dated to 1850 BCE~\cite{archibald}, which some authors identify as the earliest written evidence of aggregation functions. However, the theoretical development of aggregation gained significant momentum in the twentieth century, driven by the advent of fuzzy logic~\cite{yager2004owa,zadeh1965fuzzy}, advances in computing, and the increasing need to process and summarize large datasets. This context gave rise to the challenge of condensing a dataset into a single, representative value, and to the ongoing effort to ensure that such summaries accurately reflect the underlying data.

Aggregation functions have long been the primary tools for summarizing datasets (see~\cite{beliakov2007aggregation,beliakov2016practical,grabisch2009}), and the study of their properties has been guided by the intended purpose of summarization specifically, by the level of representativeness desired. This notion of representativeness is context-dependent, and the nature of the problem at hand determines which features of the original datasets should be taken into account during aggregation.

On the other hand, the use of intervals to represent information has proven highly valuable in numerous contexts. See, for example,~\cite{dubois1988,moore1966} or~\cite{HroBriFil2017,OliAzePacVal2022} in the context of Symbolic Data Analysis).
In recent years, there has been a marked rise in research devoted to interval-based aggregation.
Researchers have proposed new operators with theoretical guarantees such as monotonicity, robustness, idempotency, and interpretability.
For instance, Diaz-Vazquez et al.~\cite{DiazVazquez2024new} introduced a novel family of interval aggregation functions  with superior theoretical and practical behavior.
Wilkin and Beliakov~\cite{WilkinBeliakov2022} developed robust methods based on compositional modes on the simplex for interval and compositional data. And Fumanal-Idocin et al.~\cite{Fumanal2021} proposed interval-valued aggregation operators based on moderate deviations, applied to brain computer interface tasks.
Zhang et al.~\cite{ZhangElastic2021} explored elastic statistical analysis of interval-valued time series, while Sun~\cite{Sun2016} provided a comprehensive overview of regression techniques specifically designed for interval-valued data
Such studies reflect the increasing demand for theoretically sound and practically effective tools to deal with interval-valued data in critical applications where reliability and interpretability matter.

However, to the best of our knowledge, there exists a notable gap in the existing literature regarding the formal analysis of averaging operators in the context of interval data aggregation. This work aims to address and fill this gap.

There are two approaches to considering intervals to represent information: the ontic and the epistemic ones~\cite{COUSO20141502}. From the epistemic point of view, the actual outcome we are trying to represent is a precise value, but for some reason this value cannot be determined in an accurate way and the best information that we have are bounds that determine an interval of possible right outcomes. From the ontic point of view, intervals themselves are the outcome, there is no imprecision in the sense that the whole set of values included in the interval is the (precise) information. In both cases the whole interval must be considered to depict the knowledge/opinion provided by the decision maker. Thus, our objective is to aggregate the interval data, this is, the data that, for various reasons such as measurement imprecision or inherent variability due to environmental conditions, are better represented as intervals. Although this work is theoretical in nature, we consider the study of interval aggregation to be broadly valuable given the wide applicability of intervals in contexts characterized by imprecision. Notable examples include interval-valued fuzzy sets~\cite{sambuc1975}, where the fundamental elements are intervals, and alpha-cuts of fuzzy numbers, in which intervals naturally arise.

In this contribution, we aim to define the concept of an average function for intervals. The notion of an average function for real numbers dates back to Cauchy in 1821~\cite{cauchy1821}, who required only that the function be internal; that is, he considered a function suitable for averaging if its output lies between the smallest and largest values among its inputs. In contemporary settings, it is common to require that an aggregation function be increasing, meaning that larger inputs yield larger outputs. Since our definition involves the notions of minimum and maximum, it will depend on the specific order imposed on the set of intervals. To ensure the greatest generality, we will consider admissible orders~\cite{Bustince} in our study. We will then characterize which interval functions qualify as average functions under these orders. Additionally, we introduce the concept of a separable interval function and use it to explore and analyze several properties of the defined functions.

The contribution is organized as follows: We recall some basic concepts and we set the notation in Section~\ref{sec:preliminaries}. Section~\ref{sec:average_number} is devoted to recalling the different approaches to the notion of an average function in the context of numbers. We discuss the same notion for interval data in Section~\ref{sec:average_interval}. Section~\ref{sec:classical_interval_averages} contains the study of the main particular types of averages for intervals. In Section~\ref{sec:example} we show a small example of the application of the operators introduced.
Finally, we address some conclusions in Section~\ref{sec:conclusion}.

\section{Preliminaries}\label{sec:preliminaries}

In this section, we establish the notation that is further used in this contribution.
We denote the closed (and bounded) intervals in $\reales$ by $a=[\unda,\ova]$, where obviously, $\unda,\ova \in \reales$ and $\unda \le \ova$. We denote by $\LL$ the set of closed intervals of real numbers, that is,
$\LL=\{[\unda,\ova]\mid \unda,\ova\in\RR\ and\ \unda\leq \ova\}$.

The notation $\LLposit$ refers to the set of intervals of non-negative values, which will be relevant in Section \ref{sec:classical_interval_averages}. Thus,
$
\LLposit=\{[\unda,\ova]\in \LL \mid 0 \le \unda\}.
$

Another particular case is commonly seen in the literature. Thus, intervals in the entire real line $\reales$ are not considered but subintervals of a prefixed interval in $\LL$. When this happens, the interval that replaces $\reales$ is denoted $\II$, its lower and upper endpoints are denoted $\infII$ and $\supII$, respectively, that is, $\II=[\infII,\supII]$, and $\Ldinter$ denotes the set of closed subintervals of $\II$, that is,
$$
\Ldinter=\{[\unda,\ova]\in \LL  \mid \infII \le \underline{a} \le \overline{a} \le \supII\}.
$$

Concerning arithmetic between intervals, we denote by $[\underline{a},\overline{a}]+[\underline{b},\overline{b}]$ the interval $[\underline{a}+\underline{b},\overline{a}+\overline{b}]$, that is, the interval containing $x+y$ for all $x\in[\unda,\ova]$ and $y\in[\underline{b},\overline{b}]$.\\
Concerning the product, $k[\underline{a},\overline{a}]=[k \underline{a},k\overline{a}]$ for $k \ge 0$ and $k[\underline{a},\overline{a}]=[k \overline{a},k\underline{a}]$ for $k < 0$, that is, the interval containing $kx$ for all $x\in[\unda,\ova]$.\\
When we consider natural numbers in this paper, we will understand that $0$ is not included: $\naturales=\{1, 2, \dots\}$.
\medskip

\begin{definition}
    Given a set $X$, a binary relation in $X$ is an order relation if it is reflexive, antisymmetric and transitive.
\end{definition}

From now on, we will consider the notation $\preceq$  for any order relation.

Clearly, there could be elements in the set that are not comparable under an order relation. In this case, it is said that the order is partial. Otherwise, it is said that it is total. More precisely,

\begin{definition}
    An order $\preceq$ defined on a set $X$ is said total if for every pair of elements $x,y\in X$ either $x\preceq y$ or $y\preceq x$ holds.
\end{definition}

\begin{definition}
  Given an order $\preceq$ defined on a set $X$ and two elements $x, y \in X$,
  \begin{itemize}
    \item their infimum (if it exists) is an element $inf$ such that $inf \preceq x,y$ and for any other element $z \in X$ such that $z \preceq x,y$ it also holds that $z \preceq inf$.
    \item their supremum (if it exists) is an element $sup$ such that $x,y \preceq sup$ and for any other element $z \in X$ such that $x,y \preceq z$, it also holds that $sup  \preceq z$.
  \end{itemize}
\end{definition}
As it is well known, if the order is total, there exist infimum and supremum for every pair of elements and they are one of them. However, the existence of these two bounds is not always guaranteed for partial orders. When they exist, a lattice is obtained.

\begin{definition}
Given a set $X$ and an order $\preceq$ on $X$, we say that $\preceq$ induces a lattice structure on $X$ (or that ($X, \preceq$) is a lattice) if for every pair of elements $x,y$ in $X$, there exists the infimum  and the supremum.
\end{definition}


It is well known that the natural order for real numbers (denoted $\leq$) is a total order, therefore, ($\reales, \leq$) is a lattice. However, when dealing with intervals, a problem arises because there is not a unique universally accepted order. The order that is considered the most intuitive one for intervals is the lattice order~\cite{Fishburn1987}, which has also been called product order~\cite{neggers1998basic}, coordinate order~\cite{davey2002introduction} or even component-wise order~\cite{taylor1999practical}. It is defined for a pair of intervals $[\unda,\ova], [\undb,\ovb]\in\LL$ as follows:
$$[\unda,\ova]\lator[\undb,\ovb]\quad \sii \quad \unda\leq\undb\ \quad and \quad  \ova\leq\ovb\,.$$

This order shows a significant disadvantage: it is not total, that is, not every pair of intervals is comparable according to it. For example, for the intervals $[2,50]$ and $[49,49.5]$, neither $[2,50]\not\lator[49,49.5]$ nor $[49,49.5]\not\lator[2,50]$, even in the case we could consider there is a clear order between then. This important drawback made necessary the search for other orders that do allow us to compare every pair of intervals.
In the following, we list three of the most common ones.

\begin{definition}\label{orders}
Let $[\unda,\ova], [\undb,\ovb]$ be two intervals in $\LL$.
\begin{itemize}
    \item The lexicographic order of type 1~\cite{Bustince} compares the lower bounds of the intervals and, if these are equal, it compares the upper bounds. Thus, it is defined as:

    $$[\unda,\ova]\lexunor[\undb,\ovb] \sii \unda<\undb\ or\ (\unda=\undb\ and \ \ova\leq \ovb).$$

    \item The lexicographic order of type 2~\cite{Bustince} compares the bounds of the intervals but, unlike type 1 lexicographic order, it gives priority to the upper bounds. Thus,

    $$[\unda,\ova]\lexdosor[\undb,\ovb] \sii \ova<\ovb\ or\ (\ova=\ovb\ and \ \unda\leq \undb)$$

    \item Xu and Yager order~\cite{Xu2006} compares the middle points of the intervals and, if this is not decisive, their lengths. It is defined as follows:
       $$
     [\unda,\ova]\xuyor[\undb,\ovb] \sii
      \unda+\ova<\undb+\ovb\ or \
(\unda+\ova=\undb+\ovb \ and \ \ova-\unda\le \ovb-\undb)
       $$
\end{itemize}
\end{definition}

Even though the lattice order is not a total order, it is the most intuitive one. This is the reason why, when a total order is necessary, other orders that respect the comparison method of the lattice order are considered. They are called admissible orders.

\begin{definition}
    ~\cite{Bustince} An admissible order $\preceq_{ao}$ is a total order that extends the lattice order, this is, that satisfies that if $[\unda,\ova]\lator[\undb,\ovb]$, then $[\unda,\ova]\ador[\undb,\ovb]$.
\end{definition}

As  examples of admissible orders we can consider the three orders of Definition~\ref{orders}.

In the same paper where the idea of admissible order was presented~\cite{Bustince}, the authors also proposed a way to obtain admissible orders that they called generated admissible orders.

\begin{definition}
    An admissible order $\preceq$ is said to be a generated admissible order if there exist two continuous functions $c_1, c_2: \LL^2\to\RR$ such that for every pair of intervals $[\unda,\ova],\ [\undb,\ovb]\in \mathcal{L}([0,1])$, we have that
    $[\unda,\ova]\preceq [\undb,\ovb]$ if, and only if, $[c_1(\unda,\ova),c_2(\unda,\ova)]\lexunor[c_1(\undb,\ovb),c_2(\undb,\ovb)]$.

\end{definition}

It is immediate that if we take $f$ as the projection onto the first component and $g$ as the projection onto the second component, we obtain the type 1 lexicographic order.
Similarly, if we take $f$ as the projection onto the second component and $g$ as the projection onto the first one, we obtain the type 2 lexicographic order, and if we take $f$ as the mean of the endpoints and $g$ as the length of the interval, we obtain the Xu and Yager order.

Moreover, the functions that allow us to generate these orders can be considered as particular cases of a family of functions firstly described in 1983 by Atanassov~\cite{Atanassov83} as $K_{\alpha}(x,y)=(1-\alpha)x+\alpha y$. If we set $K_{\alpha}$ and $K_{\beta}$
with $\alpha\neq\beta$, we get the generated admissible order given for two intervals $a, b\in\LL$ by

\begin{equation}\label{ordenesKalfa}
a  \preceq_{\alpha,\beta} b \Leftrightarrow
[K_{\alpha}(\unda,\ova),K_{\beta}(\unda,\ova)]\lexunor[K_{\alpha}(\undb,\ovb),K_{\beta}(\undb,\ovb)]
 \end{equation}

In the same manuscript, Bustince et al.~\cite{Bustince} also proved that for every $\alpha\in[0,1)$, all the orders $\preceq_{\alpha,\beta}$ with $\beta > \alpha$ are the same order and they denote them by $\preceq_{\alpha+}$.

Similarly, for every $\alpha\in(0,1]$, all the orders $\preceq_{\alpha\beta}$ with $\beta<\alpha$ are the same order and they are denoted as  $\preceq_{\alpha-}$.

Recall that for $\alpha=0$ we get the lexicographic order of type 1, for $\alpha=1$ we get the lexicographic order of type 2 while the Xu and Yager order is obtained for $\alpha=\frac{1}{2}, \beta> \alpha$:
$$
\lexunor  \equiv  \preceq_{0+} \qquad
\lexdosor \equiv  \preceq_{1-} \qquad
\xuyor    \equiv  \preceq_{\frac{1}{2}+}
$$

\section{Average functions for numbers}\label{sec:average_number}

This section presents an overview of the various approximations to the concept of an averaging function for numbers. Throughout this section, functions are not defined over all real numbers, but only on closed intervals in order to work with the boundary conditions imposed in~\cite{grabisch2009}. This is not our purpose in this paper,  and we will consider the general definition on $\mathbb{R}$ each time it is possible.

We first recall the definitions of internal, increasing and strictly increasing function.

\begin{definition} Let $\II$ be  a closed subinterval in $\RR$.
    A function $f:\II^n\to \II$ is said to be:
\begin{itemize}
\item internal if
    $\mini\{x_1,\dots,x_n\}\leq f(x_1,\dots,x_n)\leq \maxi\{x_1,\dots,x_n\}$ for every tuple $(x_1,\dots,x_n)\in \II^n$.
\item increasing if for every $\{x_1,\dots,x_n,y\} \subseteq I$ such that for some $i \in \{1,\dots,n\}$ it holds that $x_i < y$, then
    $$f(x_1,\dots x_i,\dots,x_n) \leq f(x_1,\dots,x_{i-1}, y,x_{i+1},\dots, x_n).$$

\item strictly increasing if for every $\{x_1,\dots,x_n,y\} \subseteq I$ such that for some $i \in \{1,\dots,n\}$ it holds that $x_i < y$, then
    $$f(x_1,\dots x_i,\dots,x_n) < f(x_1,\dots,x_{i-1}, y,x_{i+1},\dots, x_n).$$
\end{itemize}
\end{definition}

According to Cauchy, a function is an average if it is internal.
Cauchy did not require monotonicity in its proposal for the notion of average function.  We next show an example of an average function according to the Cauchy's definition, which does not have the expected property of monotonicity.  The operator $f:[0,1]^2\to[0,1]$ given by

$$
f(x,y)=\left\{\begin{array}{ll}
 \maxi\{x,y\} & \mbox{ if } (x,y) \in ([0,0.5]\times [0,0.5])\cup ( [0.5,1] \times [0.5,1])\\
\mini\{x,y\} & \mbox{ otherwise.}
\end{array}\right.
$$
is represented in Figure~\ref{Fig: 1}. It is immediate that $f$
is internal, since it is always smaller than or equal to the maximum and greater than or equal to the minimum.

\begin{figure}[H]
    \centering
\begin{tikzpicture}[xscale=3,yscale=3]

                \draw [black,ultra thick,arrows=->] (0,0) -- coordinate (x axis mid) (1.2,0);
    \draw [black,ultra thick,arrows=->] (0,0) -- coordinate (y axis mid) (0,1.2);

\draw [green,fill,opacity = .3] (0,0)--(0.5,0)--(0.5,0.5)--(0,0.5)--(0,0);
\draw [green,fill,opacity = .3] (0.5,0.5)--(0.5,1)--(1,1)--(1,.5)--(0.5,0.5);
\draw [green,fill,opacity = .05] (0.5,0)--(1,0)--(1,0.5)--(0.5,0.5)--(0.5,0);
\draw [green,fill,opacity = .05] (0,0.5)--(0.5,0.5)--(0.5,1)--(0,1)--(0,0.5);

\draw [black,thick] (0,.5)--(1,.5);
\draw [black,thick] (.5,0)--(.5,1);
\draw [black,thick] (1,0)--(1,1);
\draw [black,thick] (0,1)--(1,1);

\node[black] at (0.25,0.25) {\Large $Max$};
\node[black] at (0.75,0.25) {\Large $Min$};
\node[black] at (0.25,0.75) {\Large $Min$};
\node[black] at (0.75,0.75) {\Large $Max$};

\node[black] at (-0.1,-0.1) {$(0,0)$};
\node[black] at (1,-0.1) {$(1,0)$};
\node[black] at (-0.15,1.07) {$(0,1)$};
\node[black] at (1,1.07) {$(1,1)$};

\end{tikzpicture}
\caption{Example of an averaging function according to Cauchy}
    \label{Fig: 1}
\end{figure}

However, this function is not increasing as can be seen by considering that $0.45=f(0.2,0.45)>f(0.2,0.6)=0.2$ despite $0.45<0.6$.

Nevertheless, averaging functions are meant to provide a value that is representative of the information included in the inputs. Therefore, it seems logical to ask them to provide larger outcomes from larger incomes. In other words, monotonicity is a very intuitive requirement for averaging functions.

If we only require monotonicity and limit conditions, we obtain the definition of aggregation function that has been commonly accepted in the scientific community in the last years (see, for example,~\cite{beliakov2007aggregation}).

\begin{definition}\label{def:aggregfunction_number}
A function $f:\II^n\to \II$ is said to be an aggregation function if it holds that:
    \begin{itemize}
        \item $f$ is increasing.
        \item $f(\infII,\dots,\infII)=\infII$ and $f(\supII,\dots, \supII)=\supII$.
    \end{itemize}
\end{definition}

There are many functions that comply with this definition due to the few requirements imposed. Some examples are t-norms and t-conorms, usually defined on the interval $[0,1]$. They are aggregation functions, but they are not averaging functions in the sense of Cauchy's definition since they are not internal, except for the greatest t-norm, the minimum function, and the smallest t-conorm, the maximum function.

A modern approach to the concept of aggregation function is given by Grabisch through the ideas of idempotizable function and diagonal section.

\begin{definition}~\cite{grabisch2009}
    The diagonal section of any function $F:\II^n\to\RR$ is the unary function $\delta_F:\II\to\RR$ defined as $\delta_F (x)=F(x,\dots,x), \forall x\in\II$.
\end{definition}

\begin{definition}~\cite{grabisch2009}
    A function $F:\II^n\to\overline{\RR}$ is idempotizable if its diagonal section $\delta_F$ is strictly increasing and satisfies $ran(\delta_F)=ran(F)$, where $ran$ is the image of the function.
\end{definition}

This definition is closely related to the concept of idempotent function, which is central to the study of averaging functions. Let us first recall the definition of an idempotent function.

\begin{definition}
    A function $f:\II^n\to\II$ is said to be idempotent if $f(x, \dots,x)=x$ for all $x\in \II$.
\end{definition}

It is clear that if $f:\II^n\to \II$ is a monotone, idempotent function, its diagonal section is the identity function so $\II=ran(\delta_f)\subseteq ran(f)\subseteq \II$.

An example of an idempotizable function that is not an idempotent function is the function $g:[0,1]^2\to[0,1]$ given by $g(x,y)=xy$, that is, the product.

Once these notions have been recalled, we can introduce the definition of average proposed by Grabisch~\cite{grabisch2009}.

\begin{definition}[\textbf{Grabisch's definition of average}]   \label{def:average_numeros}  A function $f:\II^n\to\II$ is an average function in $\II^n$ according to Grabisch if there exists an increasing and idempotizable function $F:\II^n\to\RR$ such that $F=\delta_F\circ f$. In this case, we say that $f$ is an average associated with $F$ in $\II^n$.
\end{definition}

Although it may seem that Grabisch's and Cauchy's definitions are very different, the following result shows the connection between them.

\begin{proposition}\label{prop1}
    Let $f:\II^n\to\II$. The following statements are equivalent:
    \begin{enumerate}[i)]
        \item $f$ is idempotent and increasing.
        \item $f$ is internal and increasing.
        \item $f$ is an average function according to Grabisch's definition.
    \end{enumerate}
\end{proposition}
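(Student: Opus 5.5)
I will prove the cycle i) $\Rightarrow$ ii) $\Rightarrow$ i), and then i) $\Leftrightarrow$ iii).

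\emph{Step 1: i) $\Rightarrow$ ii).} Given $(x_1,\dots,x_n)\in\II^n$, let $m=\mini\{x_1,\dots,x_n\}$ and $M=\maxi\{x_1,\dots,x_n\}$. Monotonicity is stated one coordinate at a time. Raising the coordinates one by one from $m$ to $x_i$ therefore gives $f(m,\dots,m)\le f(x_1,\dots,x_n)$. In the same way, $f(x_1,\dots,x_n)\le f(M,\dots,M)$. Idempotency turns the two bounds into $m$ and $M$, which is internality.

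\emph{Step 2: ii) $\Rightarrow$ i).} Internality applied to the tuple $(x,\dots,x)$ gives $x\le f(x,\dots,x)\le x$. This is idempotency.

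\emph{Step 3: i) $\Rightarrow$ iii).} Take $F=f$, regarded as a map into $\RR$. It is increasing. Its diagonal section $\delta_F$ is the identity on $\II$, so $\delta_F$ is strictly increasing. Moreover $ran(\delta_F)=\II\supseteq ran(F)\supseteq ran(\delta_F)$, so $F$ is idempotizable. Finally, $\delta_F\circ f=f=F$, so $f$ is an average associated with $F$.

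\emph{Step 4: iii) $\Rightarrow$ i).} This is the main step. Let $F$ be increasing and idempotizable with $F=\delta_F\circ f$.

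First, $\delta_F$ is strictly increasing, hence injective. Evaluating $F=\delta_F\circ f$ on the diagonal gives $\delta_F(x)=F(x,\dots,x)=\delta_F(f(x,\dots,x))$. Injectivity then yields $f(x,\dots,x)=x$, so $f$ is idempotent.

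Second, for monotonicity, let $\mathbf{x}$ and $\mathbf{x}'$ differ only in coordinate $i$, with $x_i<x_i'$. Since $F$ is increasing, $F(\mathbf{x})\le F(\mathbf{x}')$, that is, $\delta_F(f(\mathbf{x}))\le\delta_F(f(\mathbf{x}'))$. Because $\delta_F$ is strictly increasing on the totally ordered set $\II$, it reflects order: $\delta_F(u)\le\delta_F(v)$ implies $u\le v$, since $u>v$ would force $\delta_F(u)>\delta_F(v)$. Hence $f(\mathbf{x})\le f(\mathbf{x}')$.

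The only delicate points are the injectivity of $\delta_F$ and its order reflection, both of which follow from strict monotonicity together with totality of $\le$ on $\II$. The range condition in the definition of idempotizable is needed only to show that $F=f$ qualifies in Step 3.
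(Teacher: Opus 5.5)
Your proof is correct and takes the same route as the paper's: the $\min/\max$ sandwich for i)$\Rightarrow$ii), the constant tuple for ii)$\Rightarrow$i), the choice $F=f$ for i)$\Rightarrow$iii), and invertibility (in your version, injectivity) of $\delta_F$ for iii)$\Rightarrow$i). Your version is also slightly more complete: you verify that $F=f$ is idempotizable, and you derive the monotonicity of $f$ from that of $F$ using the fact that $\delta_F$ reflects order, whereas the paper's proof of iii)$\Rightarrow$i) establishes only idempotency and leaves monotonicity implicit.
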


\begin{proof}
    Let $f:\II^n\to\II$ be a function and let $(x_1,\dots,x_n)$ be an element in $\II^n$.

Let us denote  $M=\maxi\{x_1,\dots,x_n\}$ and $m=\mini\{x_1,\dots,x_n\}$. If $f$ is idempotent and increasing, it follows that

    $$m=f(m,\dots,m)\leq f(x_1,\dots,x_n)\leq f(M,\dots,M)=M$$

    hence, the function is internal.

    Conversely, if we assume that the function $f$ is internal, for every $x\in\II$

    $$x=\mini\{x,\dots,x\}\leq f(x,\dots,x)\leq \maxi\{x,\dots,x\}=x$$

    this implies that $f$ is idempotent.

On the other hand, if $f:\II^n\to\II$ is idempotent and increasing, $\delta_f:\II\to\RR$ is the identity function because for every $x\in\II$, $\delta_f(x)=f(x,\dots,x)=x$. This implies that $f=\delta_f\circ f$, that is, $f$ is an average function according to Grabisch's definition.

    Finally, if $f$ is an average according to Grabisch's definition, then there exists an increasing and idempotizable function $F$ such that $F=\delta_F\circ f$ and as $F$ is idempotizable, $\delta_F$ is, by definition, strictly increasing and $ran(\delta_F)=ran(F)$, so it exists $\delta_F ^{-1}$ and this implies that we can write $f=\delta_F^{-1}\circ F$, then

    $$f(x,\dots,x)=(\delta_F^{-1}\circ F)(x,\dots,x)=\delta_F^{-1}(F(x,\dots,x))=x$$

    so $f$ is idempotent.
\end{proof}

At the sight of Proposition~\ref{prop1},  we have three equivalent ways to formalize the notion of average function according to Grabisch, and so, from now on, we will just call it average function.

Classical examples of average functions are:
\begin{itemize}
  \item The generalized $m$-power mean for $m \ge 1$:
  $$
  m\mbox{pm}(x_1, \dots, x_n)=\sqrt[m]{\frac{\sum_{i=1}^{n}x^m_i}{n}}\,.
  $$
  where $x_1,\dots,x_n$ are real values and taking only the positive root for even powers.

The case $m=1$ is the usual arithmetic mean denoted here as $mean$, but usually denoted as $\overline{x}$ in Statistics.

  \item The minimum and maximum operators, denoted by $\mini$ and $\maxi$. They always exist since the usual order considered in $\mathbb{R}$ is total.
  \item The median $Me$. Since we consider it here as a function in $\reales$, it should provide just one output. Therefore, we define it as the central value (once the inputs are ordered) if $n$ is odd and as the middle point of the two central values if $n$ is even. Thus,  $Me(1,8,3,5,4)=4$ and
      $Me(5,2,8,9,3,12)$ is  the middle point of $5$ and $8$, that is, $6.5$.
\end{itemize}
In the following section, we study their generalizations to the interval data context.


\section{On the definition of average for interval data}\label{sec:average_interval}

The main purpose of this section is the study of the definition of average function for interval data. We also present a proposal for the notion of aggregation function in this context.

For real numbers, according to Proposition~\ref{prop1}, there are three equivalent definitions of an average function. However, in the general context of intervals, even basic concepts such as order become non-trivial.  To avoid introducing unnecessary complexity into the study, we adopt the simplest of the aforementioned equivalent definitions as our starting point. Therefore, we will consider the definition based on idempotency and monotonicity. Consequently, it is necessary to establish definitions for idempotency and monotonicity in the general context of interval data.

The idea of idempotency admits the following generalization:
\begin{definition}
A function $h:\LR^n\to\LR$ is idempotent if for every $a \in \LR$ it holds that
$
h(a,\dots,a)=a\,.
$
\end{definition}

The concept of growth is contingent upon the existence of an order that enables the comparison of intervals. If the order changes, the notion of growth changes accordingly. As previously stated, there is no singular, universally accepted, and universally used order for intervals, as there is for real numbers. Consequently, the generalization of the definition of monotonicity is not a singular and fixed concept; rather, it is contingent upon the initial order that has been established.

\begin{definition}
   Let $\preceq$ be an order on $\LL$. A function $h:\LL^n\to \LL$ is said to be \mbox{$\preceq$-increasing} if
   for every $a_1,\dots,a_n,b\in \LL$ such that for some $i \in\{1,\dots,n\}$ it holds that $a_i \preceq b$, it satisfies that
$
h(a_1,\dots,a_i,\dots,a_n)\preceq h(a_1,\dots,b,\dots,a_n).
$
\end{definition}

Once the concept of monotonicity has been established in the context of interval data, it is a straightforward process to define an aggregation function over $\II \in \LL$.

\begin{definition}
Fix an element $\II \in \LL$.
   Let $\preceq$ be an order on $\LL$ such that there exist zero and one element for that order on $\Ldinter$. We will denote those two elements, the zero and one elements with respect to this order by $0_{\preceq}^{\II}$ and $1_{\preceq}^{\II}$, respectively. A function $h:\Ldinter^n\to \Ldinter$ is said to be an $\preceq$-aggregation function if it is $\preceq$-increasing and
 $h(0_{\preceq}^{\II},\dots,0_{\preceq}^{\II})=0_{\preceq}^{\II}$ and $h(1_{\preceq}^{\II},\dots,1_{\preceq}^{\II})=1_{\preceq}^{\II}.$

\end{definition}

Observe that the previous definition cannot be extended in general to a function $h$ defined on $\LL$ since for many orders  there is neither zero nor one element in $\LL$.

In the context of real numbers, one advantage of average functions over aggregation functions is that they do not require the existence of zero and one elements. This advantage is retained when moving into the context of intervals as we next show.

\begin{definition}\label{def:intervalaverage}
Let us consider an order $\preceq$ on $\LL$. A function $h:\LR^n \to\LR$ is an interval average function with respect to the order $\preceq$, denoted $\preceq$-average, if it is
 idempotent and
  $\preceq$-increasing.
\end{definition}

In the case of real numbers, Proposition~\ref{prop1} shows that two seemingly distinct definitions of an averaging function are, in fact, equivalent. To determine whether this equivalence also holds for intervals, we must first define what it means for a function to be internal in the context of intervals. This requires specifying the bounds that will constrain the function. If we adhere strictly to the definition used for real numbers, we need to formalize the notions of maximum and minimum elements within a set of intervals. To do so, we must be able to compare the intervals, which is only possible once an order has been established among them. Moreover, to guarantee the existence of a maximum and a minimum for any finite set of intervals, it is necessary that every pair of intervals be comparable; otherwise, these extremal elements may not exist. Therefore, not only is an order required, but it must also be a total order to ensure that both the maximum and minimum exist for every finite tuple of intervals.

\begin{definition}\label{def:minimo_funcion_intervalos}
Given a total order $\preceq$ on $\LL$, the operator $\mincq:\LL^n\to \LL$ is defined as
$$
\mincq\vecas=a_j,
$$
where $j\in\{1,\dots,n\}$ is such that $a_j\preceq a_i,\enskip\forall i\in\{1,\dots,n\}$.
\end{definition}

\begin{definition}\label{def:maximo_funcion_intervalos}
Given a total order $\preceq$ on $\LL$, the operator $\maxto:\LL^n\to \LL$ is defined as
$$
\maxto\vecas=a_j,
$$
where $j\in\{1,\dots,n\}$ is such that $a_j\succeq a_i\enskip\forall i\in\{1,\dots,n\}$.
\end{definition}

This is a logical definition from the ontic point of view. If the intervals are regarded as the smallest interesting piece of information and they can be arranged with respect to an order, the minimum with respect to that order is the first element of the resulting chain, and the maximum is the last element.

As previously explained, if the order is not total, it is not possible to ensure the existence of both the minimum and the maximum for every set of intervals. This is the case of the lattice order. For every pair of intervals that are incomparable according to this order, there is neither maximum nor minimum. Consider, for example, the pair $[2,5]$ and $[3,4]$. These two intervals are not comparable according to the lattice order. Therefore, we can neither set their maximum nor their minimum.

However, since the lattice order induces a lattice structure in $\LL$, we know that for every pair (and therefore for every finite set) of intervals, there exist an infimum and a supremum element. This holds for any order that induces a lattice structure.

\begin{definition}\label{def:infimum}
  Given an order $\preceq$ that induces a lattice structure on $\LL$, the infimum of a finite subset $\setas$ of $\LL$ is an interval $a_{inf}^{\preceq} \in \LL$ such that
  $a_{inf}^{\preceq} \preceq a_i$ for all $i \in \{1,\dots,n\}$ and
  for any other $b \in \LL$ such that $b \preceq a_i$ for all $i$, it also holds that $b \preceq a_{inf}^{\preceq}$.

  We will denote by $\infto$ the function that assigns the infimum element to every finite tuple. This is, the function $\infto: \LL^n\to \LL$ such that $Inf_{\preceq}\vecas=a_{inf}^{\preceq}\,.
$
\end{definition}

\begin{definition}\label{def:supremum}
  Given an order $\preceq$ that induces a lattice structure on $\LL$, the supremum of a finite subset $\setas$ of $\LL$ is an interval $a_{sup}^{\preceq} \in \LL$ such that
 $a_i\preceq a_{sup}^{\preceq}$ for all $i \in \{1,\dots,n\}$ and
    for any other $b \in \LL$ such that $a_i \preceq b $ for all $i$, it also holds that $a_{sup}^{\preceq} \preceq b$.

  We will denote by $\supto$ the function that assigns the supremum element to every finite tuple. This is, the function $\supto: \LL^n\to \LL$ such that $Sup_{\preceq}\vecas=a_{sup}^{\preceq}\,.
$
\end{definition}

For the lattice order, we have obtained explicit expressions for the infimum and the supremum. We will denote the infimum according to the lattice order by $\inflo$ and the supremum by $\suplo$.

\begin{proposition}\label{prop:inflo}
The operator $\inflo: \LL^n \to \LL$ can be expressed as
$$\inflo\vecas=[\mini\{\unda_1,\dots \unda_n\},\mini\{\ova_1,\dots,\ova_n\}],$$
where $\vecas=(\colec)$.
\end{proposition}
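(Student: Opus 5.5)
The plan is to verify directly that the candidate interval
$c=[\mini\{\unda_1,\dots,\unda_n\},\mini\{\ova_1,\dots,\ova_n\}]$
satisfies the two conditions of Definition~\ref{def:infimum} for the lattice order $\lator$. Uniqueness of the infimum then follows from antisymmetry of $\lator$: two infima would each be below the other. So it suffices to exhibit one interval with the two properties.

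The first step is to check that $c$ is really an element of $\LL$, i.e.\ that its lower endpoint does not exceed its upper endpoint. Let $j$ be an index where $\ova_j$ attains $\mini\{\ova_1,\dots,\ova_n\}$. Then
$$\mini\{\unda_1,\dots,\unda_n\}\le \unda_j\le \ova_j=\mini\{\ova_1,\dots,\ova_n\}.$$
This is the only point where the interval structure matters, and it is the step I would state carefully. It is the (mild) obstacle: the componentwise construction only yields an interval because the minimum of the lower endpoints is dominated by every lower endpoint, in particular by the one sitting below the smallest upper endpoint.

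The second step is to show that $c$ is a lower bound. For each $i$, $\mini_k \unda_k\le \unda_i$ and $\mini_k\ova_k\le\ova_i$, so $c\lator a_i$ by the definition of $\lator$.

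The third step is to show that $c$ is the greatest lower bound. Take any $b=[\undb,\ovb]\in\LL$ with $b\lator a_i$ for all $i$. Then $\undb\le\unda_i$ and $\ovb\le\ova_i$ for every $i$, hence $\undb\le\mini_i\unda_i$ and $\ovb\le\mini_i\ova_i$, i.e.\ $b\lator c$. Together with the second step this shows $c=\inflo\vecas$.
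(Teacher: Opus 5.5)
Your proof is correct and follows the same route as the paper: show that the componentwise-minimum interval is a $\lator$-lower bound of every $a_i$, and that every lower bound lies below it. Your extra checks that $\mini\{\unda_1,\dots,\unda_n\}\le\mini\{\ova_1,\dots,\ova_n\}$ (so the candidate actually belongs to $\LL$) and that the infimum is unique by antisymmetry are steps the paper leaves implicit.
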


\begin{proof}
    Let $\{a_i=[\underline{a}_i,\overline{a}_i]\}_{i=1}^n \subseteq \LL$. Then, let us consider the interval $[\mini\{\unda_1,\dots \unda_n\},\mini\{\ova_1,\dots,\ova_n\}]$. Since $\mini\{\unda_1,\dots,\unda_n\}\leq \underline{a}_i\ and\ \mini\{\ova_1,\dots,\ova_n\}\leq \overline{a}_i,\ \forall i\in\{1,\dots,n\}$, it is clear that
    $$
    [\mini\{\unda_1,\dots,\unda_n\},\mini\{\ova_1,\dots,\ova_n\}]\lator[\unda_i,\ova_i]\,
    $$
    $\forall i\in\{1,\dots,n\}$. Now, let $a=[\unda,\ova]\in\LL$ such that $a\lator a_i, \forall i\in\{1,\dots,n\}$, this is, $\unda\leq \unda_i\ and\ \ova\leq \ova_i \;\forall i\in\{1,\dots,n\}$, therefore $\unda\leq \mini\{\unda_1,\dots,\unda_n\}$ and $\ova\leq \mini\{\ova_1,\dots,\ova_n\}$. It follows $$[\unda,\ova]\lator[\mini\{\unda_1,\dots,\unda_n\},\mini\{\ova_1,\dots,\ova_n\}].$$
\end{proof}

Note that the resulting interval is not ne\-ce\-ssa\-ri\-ly one of the intervals of the departing set. For example, observe that $\inflo([2,5],[3,4])=[2,4]$.

However, this definition fits better if we consider intervals from the epistemic point of view: if each interval represents the set of values that a decision maker considers that are possible, then the interval that represents the minimum of all the answers given by the experts should be the interval bounded between the minimum of the lower bounds and the minimum of the upper bounds.

Analogously, we can prove the following result.

\begin{proposition}\label{prop:suplo}
The operator $\suplo: \LL^n \to \LL$ can be expressed as
$$\suplo\vecas=[\maxi\{\unda_1,\dots \unda_n\},\maxi\{\ova_1,\dots,\ova_n\}],$$
where $\vecas=(\colec)$.
\end{proposition}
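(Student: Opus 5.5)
The plan mirrors the proof of Proposition~\ref{prop:inflo}, with every inequality reversed. Fix $a_i=[\unda_i,\ova_i]\in\LL$ for $i\in\{1,\dots,n\}$, and consider the candidate
$$s=[\maxi\{\unda_1,\dots,\unda_n\},\maxi\{\ova_1,\dots,\ova_n\}].$$

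The first step is to check that $s$ is really an element of $\LL$, that is, that its lower endpoint does not exceed its upper endpoint. Let $j$ be an index with $\unda_j=\maxi\{\unda_1,\dots,\unda_n\}$. Then
$$\maxi_i \unda_i=\unda_j\le \ova_j\le \maxi_i \ova_i,$$
so $s\in\LL$.

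The second step is to show that $s$ is an upper bound for the $a_i$ in the lattice order. For every $i$ we have $\unda_i\le \maxi\{\unda_1,\dots,\unda_n\}$ and $\ova_i\le \maxi\{\ova_1,\dots,\ova_n\}$. By the definition of $\lator$, this gives $a_i\lator s$ for all $i$.

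The third step is minimality among upper bounds. Let $b=[\undb,\ovb]\in\LL$ satisfy $a_i\lator b$ for all $i$. Then $\unda_i\le\undb$ and $\ova_i\le\ovb$ for every $i$. Taking the maximum over $i$ yields $\maxi\{\unda_1,\dots,\unda_n\}\le\undb$ and $\maxi\{\ova_1,\dots,\ova_n\}\le\ovb$, that is, $s\lator b$. Hence $s$ satisfies Definition~\ref{def:supremum}, and $\suplo(a_1,\dots,a_n)=s$.

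None of these steps is a real obstacle. The only point that needs explicit attention is the well-definedness check in the first step, which the statement leaves implicit: $\maxi\{\unda_1,\dots,\unda_n\}\le \maxi\{\ova_1,\dots,\ova_n\}$ must hold for $s$ to be an interval at all.
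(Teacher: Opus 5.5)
Your proof is correct and takes the same route as the paper, which proves this result "analogously" to Proposition~\ref{prop:inflo}: show the componentwise maxima give a $\lator$-upper bound, then that every upper bound dominates it. Your explicit check that $\maxi\{\unda_1,\dots,\unda_n\}\le\maxi\{\ova_1,\dots,\ova_n\}$, so that the candidate is an interval, is a detail the paper leaves implicit, and you handle it correctly.
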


Neither in this case does the aggregation necessarily result in an interval from the set being aggregated as can be seen using the same example:
$
\suplo([2,5],[3,4])=[3,5]
$. Of course, this is not a necessary requirement, since even in the classical case of aggregating numbers this property is not fulfilled in general.

\begin{remark}
  For the particular case of total orders, the infimum and supremum of every finite set of elements becomes the minimum and the maximum respectively. Therefore, the infimum and supremum functions previously defined become the minimum and maximum functions given in Definitions~\ref{def:minimo_funcion_intervalos} and~\ref{def:maximo_funcion_intervalos}.
\end{remark}

%
%
%

Let us recall that we are trying to extend the notion of internal function to intervals and that, in order to do that, we need to set what we consider as the limits. We have seen that the notions of minimum and maximum used for numbers can be generalized to infimum and supremum in the context of intervals, and the existence of such bounds is guaranteed as long as the order induces a lattice structure.
We are now in a position to define internal function with respect to an order for intervals.

\begin{definition}\label{def:internal_function_intervals}
Given $\preceq$ an order that induces a lattice structure on $\LL$, the function $h:\LL^n\to \LL$ is internal with respect to the order $\preceq$ (shortly, $\preceq$-internal) if $Inf_{\preceq} (a_1, a_2, \ldots, a_n) \preceq h(a_1, a_2, \ldots, a_n) \preceq Sup_{\preceq}(a_1, a_2, \ldots, a_n)$, for any $a_1, a_2, \ldots, a_n\in \LL$.
\end{definition}

Observe that for total orders if a function $h$ is not internal is because either $h\vecas \prec \mini_{\preceq}\vecas
$ or
$
\maxi_{\preceq}\vecas \prec h\vecas
$,
for some tuple $\vecas$. However, when the order is not total it can also happen that $h\vecas$ is not comparable with the infimum, $\infto \vecas$ or with the supremum, $\supto \vecas$.
Consider the function $h:\LL^2\to \LL$ given by $h(a_1,a_2)=[0,1]$. If we consider the intervals $[0.1,0.3],[0.2,0.4]$ it holds that
$h([0.1,0.3],[0.2,0.4])=[0,1]$, $\inflo([0.1,0.3],[0.2,0.4])=[0.1,0.3]$ and $\suplo([0.1,0.3],[0.2,0.4])=[0.2,0.4]$. So $h([0.1,0.3],[0.2,0.4])$ is neither comparable (according to the lattice order) to $\inflo([0.1,0.3],[0.2,0.4])$ nor to $\suplo([0.1,0.3],[0.2,0.4])$.

Once introduced the notion of internal function for intervals, we can study the equivalence of Proposition~\ref{prop1} in the context of interval data. And we can provide a positive answer: the equivalence holds in this more general context too for the case of orders inducing a lattice structure.

\begin{proposition}
    Let $\preceq$ be an order that induces a lattice structure on $\LL$ and let \mbox{$h:\LL^n\to \LL$} be an $\preceq$-increasing function. Then, the following statements are equivalent:
    \begin{enumerate}[i)]
        \item $h$ is $\preceq$-internal.
        \item $h$ is idempotent.
    \end{enumerate}
\end{proposition}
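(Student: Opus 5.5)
The plan is to mirror the proof of Proposition~\ref{prop1}, replacing $\mini$ and $\maxi$ by $\infto$ and $\supto$, which exist because $\preceq$ induces a lattice structure on $\LL$. The only fact about lattices we need is that the infimum and supremum of a finite set consisting of a single repeated element $a$ are $a$ itself. Indeed, $a\preceq a$ by reflexivity, and any lower (resp.\ upper) bound $b$ of $\{a\}$ satisfies $b\preceq a$ (resp.\ $a\preceq b$). Hence $\infto(a,\dots,a)=\supto(a,\dots,a)=a$, with uniqueness of the infimum and supremum guaranteed by antisymmetry.

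\emph{(i) implies (ii).} Assume $h$ is $\preceq$-internal and fix $a\in\LL$. Definition~\ref{def:internal_function_intervals} gives
$$
a=\infto(a,\dots,a)\preceq h(a,\dots,a)\preceq \supto(a,\dots,a)=a .
$$
Antisymmetry of $\preceq$ then yields $h(a,\dots,a)=a$, so $h$ is idempotent. Monotonicity is not used in this direction.

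\emph{(ii) implies (i).} Assume $h$ is idempotent and $\preceq$-increasing, and fix $(a_1,\dots,a_n)\in\LL^n$. Write $m=\infto(a_1,\dots,a_n)$ and $M=\supto(a_1,\dots,a_n)$. By definition, $m\preceq a_i\preceq M$ for every $i$. The key step is to pass from the coordinatewise monotonicity in the definition of $\preceq$-increasing to the full tuple. We replace coordinates one at a time. Since $m\preceq a_1$, we get $h(m,m,\dots,m)\preceq h(a_1,m,\dots,m)$. Since $m\preceq a_2$, we get $h(a_1,m,\dots,m)\preceq h(a_1,a_2,m,\dots,m)$. Continuing up to the $n$-th coordinate and using transitivity of $\preceq$, we obtain $h(m,\dots,m)\preceq h(a_1,\dots,a_n)$. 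The symmetric chain, replacing each $a_i$ by $M$, gives $h(a_1,\dots,a_n)\preceq h(M,\dots,M)$. Idempotency turns the two ends into $m$ and $M$, which is exactly $\preceq$-internality.

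The only delicate point is this chaining argument. The definition of $\preceq$-increasing as stated in the paper handles one coordinate at a time and allows the hypothesis $a_i\preceq b$ to be an equality, so each step is legitimate. Transitivity of the order is what assembles the steps. The rest of the argument uses only reflexivity and antisymmetry, so no totality of $\preceq$ is needed. This is why the equivalence holds for every order inducing a lattice structure, in particular for the lattice order $\lator$.
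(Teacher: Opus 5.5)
Your proof is correct and follows the same route as the paper. For (i)$\Rightarrow$(ii) you use antisymmetry together with $\infto(a,\dots,a)=\supto(a,\dots,a)=a$, and for (ii)$\Rightarrow$(i) you use idempotency and monotonicity on the constant tuples of $\infto$ and $\supto$; your coordinate-by-coordinate chaining via transitivity makes explicit a step the paper leaves implicit, since its definition of $\preceq$-increasing changes only one argument at a time.
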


\begin{proof}
Let us suppose that $\preceq$ is an order relation that induces a lattice structure on $\LL$.

Let us suppose that   $h$ is $\preceq$-internal. For any $a \in \LL$, we obtain that
$Inf_{\preceq}(a,\dots,a)\preceq h(a,\dots,a)\preceq Sup_{\preceq}(a,\dots,a)$. As    it is immediate that $Inf_{\preceq}(a,a\ldots,a)=a$ and
$Sup_{\preceq}(a,a\ldots,a)=a$, then
$a\preceq h(a,a,\ldots,a)\preceq a$. Applying now the antisymmetry of $\preceq$, we obtain that $h(a,a,\ldots,a) =a$, that is,
  $h$ is idempotent.

Conversely, let us consider that  $h$ is idempotent.  Then, for any $a_1,\dots,a_n\in\LL$ we have that
$
Inf_{\preceq}\vecas =h(Inf_{\preceq}\vecas, \dots, Inf_{\preceq}\vecas)
$
and
$
Sup_{\preceq}\vecas =h(Sup_{\preceq}\vecas, \dots, Sup_{\preceq}\vecas)
$.
Moreover, since $h$ is $\preceq$-increasing, we have that
$
h(Inf_{\preceq}\vecas,\dots,Inf_{\preceq}\vecas) \preceq h\vecas
$
and also that
$
h(Sup_{\preceq}\vecas, \dots, Sup_{\preceq}\vecas) \geq h\vecas
$.
     Hence, the function $h$ is internal.
\end{proof}

\section{On classical interval average operators}\label{sec:classical_interval_averages}

In this section, we examine several of the most classical aggregation operators in the context of real numbers. We recall their generalizations to interval data and investigate whether they can also be regarded as average operators in this broader setting. The concept of averaging inherently involves monotonicity, which itself depends on the chosen order. In this work, we focus on the lattice order and the orders $\preceq_{\alpha \beta}$ defined in Eq.~\ref{ordenesKalfa}. Nevertheless, as demonstrated in Proposition~\ref{prop:minymaxaverageparatodoorden}, a general result can be established for all admissible orders.

\subsection{Separable functions}

Some operators over intervals can be expressed using two functions defined over real values (see, for example,~\cite{DiazVazquez2024new}). Following the terminology commonly used in the literature, we refer to such functions as separable. We formally define them in Definition~\ref{def:separable}, and subsequently analyze some of their general properties.

\begin{definition}\label{def:separable}
    A function $h:\LL^n\to \LL$ is called a separable function if there exist $f$ and $g$ satisfying $f, g: \RR^n \to \RR$ with
  \begin{equation}\label{eq:hcomo2fg}
  h(\colec)=[f(\unda_1,\dots,\unda_n),g(\ova_1,\dots,\ova_n)]\,,
  \end{equation}
where f and g are called component functions of h. In particular, f is called the lower component function of h and g is called the upper component function of h.
\end{definition}

Not every interval aggregation function $h:\LL^n\to\LL$ is separable. The minimum with respect to the lexicographic order of type 1 (and also with respect to the lexicographic order of type 2) is an example of non separable interval aggregation function.

\begin{remark}
Let $(a_1,\dots,a_n) \in \LL^n$ such that $\unda_1 < \unda_i$ for all $i \ge 2$ and $\ova_1 \neq \ova_n$.

If $\minlexuno$ is a separable function, with $f$ and $g$ as the lower
and upper component functions, we obtain that
$\minlexuno(\colec) = [\unda_1, \ova_1]$. This implies that $f(\unda_1, \dots, \unda_n) = \unda_1$ and
$g(\ova_1, \dots, \ova_n) = \ova_1$.

Moreover,
$\minlexuno([\unda_n, \ova_1], [\unda_2, \ova_2], \dots, [\unda_{n-1}, \ova_{n-1}], [\unda_1, \ova_n]) = [\unda_1, \ova_n]$, which implies that  $g(\ova_1, \dots, \ova_n) = \ova_n$. This leads to a contradiction and therefore the function $\minlexuno:\LL^n\to\LL$ is not separable.
\end{remark}

When a function $h:\LL^n\to \LL$ is separable, the study of some of the properties of the departure function $h$ can be transferred to the study of the corresponding property in its components. In the following results we explore some of those properties.

We begin by establishing a characterization of separable functions.

\begin{proposition}\label{prop:idempotency_h<->fg}
  Let $h:\LL^n\to \LL$ be a separable function with component functions f and g, then $h$ is idempotent if and only if both $f$ and $g$ are idempotent.
\end{proposition}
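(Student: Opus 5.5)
The plan is to unfold both directions directly from Definition~\ref{def:separable} and the definition of idempotency, using equality of intervals as equality of both endpoints. Here idempotency of $f,g:\RR^n\to\RR$ means $f(x,\dots,x)=x$ and $g(x,\dots,x)=x$ for every $x\in\RR$.

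For the converse direction, assume $f$ and $g$ are idempotent and take $a=[\unda,\ova]\in\LL$. By Eq.~\eqref{eq:hcomo2fg},
$$h(a,\dots,a)=[f(\unda,\dots,\unda),g(\ova,\dots,\ova)]=[\unda,\ova]=a.$$
So $h$ is idempotent.

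For the direct direction, assume $h$ is idempotent. For any $a=[\unda,\ova]$ we get $[f(\unda,\dots,\unda),g(\ova,\dots,\ova)]=[\unda,\ova]$. Comparing endpoints gives $f(\unda,\dots,\unda)=\unda$ and $g(\ova,\dots,\ova)=\ova$. To show $f$ is idempotent at an arbitrary $x\in\RR$, I apply this to the degenerate interval $[x,x]$, or to any $[x,y]$ with $y\ge x$. This gives $f(x,\dots,x)=x$. Likewise, using $[x,x]$, or any $[y,x]$ with $y\le x$, gives $g(x,\dots,x)=x$.

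The only delicate point is that every real number must be reachable both as a lower endpoint and as an upper endpoint of some interval in $\LL$. This holds because degenerate intervals $[x,x]$ belong to $\LL$, so the diagonal values of $f$ and $g$ are determined on all of $\RR$ and nothing is left unconstrained.
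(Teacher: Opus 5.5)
Your proposal is correct and follows essentially the same approach as the paper. For the forward direction you evaluate $h$ on the degenerate interval $[x,x]$ and compare endpoints to get $f(x,\dots,x)=g(x,\dots,x)=x$; for the converse you substitute the idempotency of $f$ and $g$ directly into the separable form of $h$.
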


\begin{proof}
  If $h$ is idempotent then for any $x \in \reales$ it holds that $[x,x] \in \LL$ and
  $
 [x,x]=  h([x,x],\dots,[x,x])= [f(x,\dots,x), g(x,\dots,x)]
  $, and so $f(x,\dots,x)=x$ and $g(x,\dots,x)=x$.

  Conversely, if  $f$ and $g$ are idempotent in $\reales$, then for any $[\unda,\ova] \in \LL$ it holds that
  $h([\unda,\ova],\dots,[\unda,\ova]) =  [f(\unda,\dots,\unda),g(\ova,\dots,\ova)] =
  [\unda,\ova]$.
\end{proof}

We next examine the monotonicity of separable functions in relation to their components.

\begin{proposition}\label{prop:monotonicity_lator}
  Let $h:\LL^n\to \LL$ be a separable function with lower and upper component functions f and g, respectively. Then, $h$ is $\lator$-increasing if and only if both $f$ and $g$ are increasing.
\end{proposition}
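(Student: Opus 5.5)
The natural route is to work directly with the definition of separability (\ref{eq:hcomo2fg}) and the componentwise description of $\lator$. Both directions should then reduce to statements about real numbers.

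For the sufficiency direction, assume $f$ and $g$ are increasing. Take $a_1,\dots,a_n,b\in\LL$ with $a_i\lator b$ for some $i$, so that $\unda_i\le\undb$ and $\ova_i\le\ovb$. Since $f$ is increasing, replacing $\unda_i$ by $\undb$ in the lower components gives $f(\unda_1,\dots,\unda_i,\dots,\unda_n)\le f(\unda_1,\dots,\undb,\dots,\unda_n)$. This holds trivially if $\unda_i=\undb$, and by the definition of increasing if $\unda_i<\undb$. The same argument with $g$ handles the upper components. By (\ref{eq:hcomo2fg}), these two inequalities are exactly $h(a_1,\dots,a_i,\dots,a_n)\lator h(a_1,\dots,b,\dots,a_n)$.

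For the necessity direction, assume $h$ is $\lator$-increasing. To show that $f$ is increasing, fix $x_1,\dots,x_n,y\in\RR$ with $x_i<y$. The task is to realise these reals as lower endpoints of genuine intervals in a way that makes the $\lator$ hypothesis applicable. Let $M=\maxi\{x_1,\dots,x_n,y\}$ and set $a_j=[x_j,M]$ for every $j$, and $b=[y,M]$. These are legitimate elements of $\LL$, and $a_i\lator b$ because $x_i<y$ and $M\le M$. Monotonicity of $h$ then gives
$$[f(x_1,\dots,x_n),g(M,\dots,M)]\lator[f(x_1,\dots,y,\dots,x_n),g(M,\dots,M)],$$
and the comparison of lower endpoints yields the required inequality for $f$. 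The argument for $g$ is symmetric: put $m=\mini\{x_1,\dots,x_n,y\}$ and use the intervals $[m,x_j]$ and $[m,y]$, then read off the upper endpoints.

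The only delicate point is in this necessity direction. Since $f$ and $g$ are defined on all of $\RR^n$, one must check that every real tuple occurs as the tuple of lower (respectively upper) endpoints of some admissible family of intervals, while keeping the other endpoints fixed so that the modified interval still dominates in $\lator$. The common endpoint $M$ (respectively $m$) achieves both at once, so I do not expect a real obstacle beyond keeping the construction consistent.
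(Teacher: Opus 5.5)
Your proof is correct and follows essentially the paper's route: sufficiency reads the two $\lator$ inequalities off the monotone components, and necessity embeds real tuples as endpoints of intervals. The only difference is the embedding: the paper uses degenerate intervals $[x_j,x_j]$ and $[y,y]$, which gives the inequalities for $f$ and $g$ at once and removes the point you flag as delicate. Your common endpoints $M$ and $m$ handle each component separately, which is equally valid but slightly longer.
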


\begin{proof}
Assume $[\unda_1,\ova_1] \lator [\underline{b}_1,\overline{b}_1]$. This is equivalent to $\unda_1 \le \undb_1$ and $\ova_1 \le \ovb_1$.\\
If $f$ and $g$ are increasing, it holds that $f(\unda_1,\unda_2,\dots,\unda_n) \le f(\underline{b}_1,\unda_2,\dots,\unda_n)$ and $g(\ova_1,\ova_2,\dots,\ova_n) \le g(\overline{b}_1,\ova_2,\dots,\ova_n)$
and then $h(\colec) \lator h([\underline{b}_1,\overline{b}_1],[\underline{a}_2,\overline{a}_2],\dots,[\underline{a}_n,\overline{a}_n])
$.

To prove the converse implication, take $x_1 \le y_1$ and $x_2,\dots,x_n$ real values, then $[x_1,x_1] \lator [y_1,y_1]$. By hypothesis,
$
h([x_1,x_1],[x_2,x_2],\dots,[x_n,x_n]) \lator h([y_1,y_1],[x_2,x_2],\dots,[x_n,x_n])
$.
Then,
$
[f(x_1,x_2,\dots,x_n),g(x_1,x_2,\dots,x_n] \lator
 [f(y_1,x_2,\dots,x_n),g(y_1,x_2,\dots,x_n)]
$
and so
$
f(x_1,x_2,\dots,x_n) \le f(y_1,x_2,\dots,x_n)
$
and
$
g(x_1,x_2,\dots,x_n) \le g(y_1,x_2,\dots,x_n).
$

The case for the first component has been considered; the remaining cases can be established in an analogous manner.
\end{proof}

We now investigate how the monotonicity properties of the component functions influence the lexicographical ordering associated with a separable function.

\begin{proposition}\label{prop:monotonicity_lexunor}
  Let $h:\LL^n\to \LL$ be a separable function with lower and upper component functions $f$ and $g$, respectively.
Then,
\begin{enumerate}[i)]
  \item if $f$ is strictly increasing and $g$ is increasing, then $h$ is $\lexunor$-increasing,
  \item if $g$ is strictly increasing and $f$ is increasing, then $h$ is $\lexdosor$-increasing.
\end{enumerate}
\end{proposition}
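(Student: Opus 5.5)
We argue straight from the definitions of $\lexunor$ and $\lexdosor$, treating one coordinate at a time as in the proof of Proposition~\ref{prop:monotonicity_lator}. Let $a_1,\dots,a_n,b\in\LL$ with $a_i\lexunor b$ for some $i$; by symmetry we take $i=1$. By the definition of $\lexunor$, either $\unda_1<\undb$, or $\unda_1=\undb$ and $\ova_1\le\ovb$. Since $h$ is separable, it suffices to compare the intervals
$$[f(\unda_1,\dots,\unda_n),g(\ova_1,\dots,\ova_n)] \quad\text{and}\quad [f(\undb,\unda_2,\dots,\unda_n),g(\ovb,\ova_2,\dots,\ova_n)]$$
under $\lexunor$.

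For part i), we split into the two cases above.

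\emph{Case $\unda_1<\undb$.} Strict increasingness of $f$ gives $f(\unda_1,\dots,\unda_n)<f(\undb,\unda_2,\dots,\unda_n)$. The lower endpoints are then strictly ordered, so the images are $\lexunor$-ordered whatever happens to the upper components. This matters because $\ova_1$ and $\ovb$ may satisfy $\ova_1>\ovb$ here, so $g$ could decrease.

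\emph{Case $\unda_1=\undb$ and $\ova_1\le\ovb$.} The lower components coincide, so the $f$-values are equal. Increasingness of $g$ gives $g(\ova_1,\dots)\le g(\ovb,\dots)$, hence the $\lexunor$ relation holds via its second clause.

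Part ii) is the mirror argument with the roles of lower and upper endpoints exchanged. If $\ova_1<\ovb$, strict increasingness of $g$ decides the comparison. If $\ova_1=\ovb$ and $\unda_1\le\undb$, the $g$-values are equal and increasingness of $f$ gives the weak inequality on lower endpoints.

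There is no real obstacle. The one point to state explicitly is why strictness is needed on the priority component: in the first case nothing controls the other coordinate, so only a strict inequality on the prioritized endpoint guarantees the lexicographic comparison. We also note that $h$ maps into $\LL$ by assumption, so the images are genuine intervals and $\lexunor$ applies. The remaining indices $i\ge 2$ follow identically.
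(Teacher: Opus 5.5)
Your proof is correct and follows the paper's argument essentially verbatim: the same two-case split from the definition of $\lexunor$, with strict increasingness of $f$ deciding the case $\unda_1<\undb$, monotonicity of $g$ handling the case $\unda_1=\undb$, $\ova_1\le\ovb$, and the $\lexdosor$ case obtained by exchanging the roles of the endpoints. One minor wording point: $h$ is not assumed symmetric, so reducing to $i=1$ works because the argument for each index is analogous, not by symmetry; your closing remark about the indices $i\ge 2$ already covers this.
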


\begin{proof}
Let us consider the case of  the lexicographic order of type 1. Assume $[\unda_1,\ova_1] \lexunor [\undb,\ovb]$. This is equivalent to $\unda_1 < \undb$ or $(\unda_1 = \undb$ and $\ova_1 \le \overline{b})$. Assume $\unda_1 < \undb$. Since $f$ is strictly increasing, it holds that
$
f(\unda_1,\unda_2,\dots,\unda_n) < f(\undb,\unda_2,\dots,\unda_n)
$
and then
$
h(\colec) \lexunor h([\undb,\ovb],[\unda_2,\ova_2],\dots,[\unda_n,\ova_n])
$.

If $\unda_1 = \undb$ and $\ova_1 \le \ovb$, then, $f(\unda_1,\unda_2,\dots,\unda_n) = f(\undb,\unda_2,\dots,\unda_n)$  and, since $g$ is increasing,
$g(\ova_1,\ova_2,\dots,\ova_n) \le g(\ovb,\ova_2,\dots,\ova_n)$. Thus,
we have that
$
h(\colec) \lexunor h([\undb,\ovb],[\unda_2,\ova_2],\dots,[\unda_n,\ova_n])
$
as we wanted to prove.



The case corresponding to the lexicographic order of type 2 is completely analogous and therefore the proof is omitted here.
\end{proof}

The previous conditions are not necessary. It can happen that $h$ is $\lexunor$-increasing, even though neither
$f$
 nor
$g$
 is strictly increasing.

\begin{example}
Consider the operators $f(x,y) =0$ and $g(x,y)=0$.
The separable function $h:\LL^2\to \LL$  with lower and upper component functions $f$ and $g$
is $\lexunor$ and $\lexdosor$-increasing despite neither $f$ nor $g$ are strictly increasing.
\end{example}

Now that we have seen that none of the previous implications hold in the reverse direction, we proceed to show that the imposed conditions cannot be weakened. That is, if the monotonicity is not strict or if one of the two components is not increasing, the function
$h$
 may fail to be increasing with respect to the corresponding orders.

\begin{example}
Consider the operators $f(x,y) =0$ and $g(x,y)=0$ if $x\le 0$, $g(x,y)=x$, otherwise. It is immediate that both are increasing. However, the separable function \mbox{$h:\LL^2\to \LL$}  with lower and upper component functions $f$ and $g$
is not $\lexunor$-increasing, since $h([2,7],[6,12]) = [f(2,6),g(7,12)] = [0,7]$
and $h([4,5],[6,12]) = [f(4,6),g(5,12)] = [0,5]$. Thus, $[2,7]\preceq_{Lex1} [4,5]$, but
$h([4,5],[6,12]) \prec_{Lex1} h([2,7],[6,12]) $.

If we consider the separable function $h'$ with lower component function $g'$ defined as $g'(x,y)=x$ if $x \le 0$, $g'(x,y)=0$ otherwise, and upper component function $f$, we obtain that $ h([-4,7],[6,12])  = [-4,0]  \prec_{Lex2} [-2,0 ]=h([-2,5],[6,12])$ even in the case \mbox{$[-2,5]\preceq_{Lex2} [-4,7]$}.

On the other hand, if we consider $f(x,y) =-e^{-x}$ and $g(x,y)=e^{-x}$, it holds that $f$ is strictly increasing, $g$ is strictly decreasing and $f(x,y) \le g(x,y)$ for all $(x,y) \in \reales^2$. Then $[0,1]\prec_{Lex1} [0,2]$ and
$h([0,2],[0,2]) = [f(0,0),g(2,2)] = [-1,\frac{1}{e^2}] \prec_{Lex1} [-1,\frac{1}{e}] = h([0,1],[0,2])$.
Thus, $h$ is not increasing, even when $f$ is strictly increasing, which shows that the monotonicity condition on $g$ is indeed necessary.

Similarly, if we consider $f(x,y) =-e^x$ and $g(x,y)=e^{x}$, then $f$ is strictly decreasing and $g$ is strictly increasing with $f(x,y) \le g(x,y)$ for all $(x,y) \in\reales^2$. Now, $[-2,0]\preceq_{Lex2} [-1,0]$ but
$h([-1,0],[-2,0]) = [f(-1,-2),g(0,0)] = [-\frac{1}{e},1] \prec_{Lex2} [-\frac{1}{e^2},1] = [f(-2,-2),g(0,0)] =
h([-2,0],[-2,0])$ and this example shows that $h$ may fail to be increasing, even when $g$ is strictly increasing, due to the fact that $f$ is not increasing.
\end{example}

The implications of Proposition \ref{prop:monotonicity_lexunor} do not hold in general for every
$\preceq_{\alpha,\beta}$-admissible order.

\begin{example}
Let us define the following functions, both strictly increasing in each variable:
$
f(x, y) = x + e^{y}$
 and $g(x, y) = e^{x} + y$.

 Then we have that
$h([1,2],[0,0]) = [2,e^2] \prec_{XY} [0,e^3]= h([-1,3],[0,0]) $,
despite $[-1,3]\preceq_{XY} [1,2]$.

 Thus, the separable function $h$  with lower and upper component functions $f$ and $g$  is not $\preceq_{XY}$-increasing, even in the case $f$ and $g$ are strictly increasing.
\end{example}

\subsection{Infimum and supremum}

Two of the most well-known averages for real numbers are the minimum and the maximum. In the context of intervals, only total orders allow to assure the existence of minimum and maximum.   However, orders as the lattice order, which induces a lattice structure on $\LL$, allow to guarantee the existence of infimum and supremum.  Thus, this type of orders can provide  a more general result, in the sense that the definition of average considered is not restricted to the lattice order or to the
$\preceq_{\alpha,\beta}$-orders.

\begin{proposition}\label{prop:minymaxaverageparatodoorden}
Given an order $\preceq$ that induces a lattice structure on $\LL$,
\begin{enumerate}[i)]
  \item the infimum function with respect to  $\preceq$, $\infto$,  is an $\preceq$-average.
  \item the supremum function with respect to  $\preceq$, $\supto$,  is an $\preceq$-average.
\end{enumerate}
\end{proposition}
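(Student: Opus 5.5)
By Definition~\ref{def:intervalaverage}, a $\preceq$-average is a function that is idempotent and $\preceq$-increasing. So for each of $\infto$ and $\supto$ I will check these two properties directly from the universal properties in Definitions~\ref{def:infimum} and~\ref{def:supremum}. I will only use that $\preceq$ is reflexive, antisymmetric and transitive, and that infima and suprema of finite sets exist because $\preceq$ induces a lattice structure. Item ii) is the order-dual of item i), so I will write out i) and obtain ii) by swapping $\preceq$ with $\succeq$ and infimum with supremum.

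\textbf{Idempotency.} Fix $a\in\LL$. By reflexivity, $a\preceq a$, so $a$ is a lower bound of $\{a,\dots,a\}$. Any lower bound $b$ satisfies $b\preceq a$. Hence $a$ meets the defining property of the infimum. Uniqueness of the infimum then gives $\infto(a,\dots,a)=a$; uniqueness itself follows from antisymmetry, since two infima are each $\preceq$ the other. This is the same fact already used in the proof of the preceding proposition.

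\textbf{Monotonicity.} Take $a_1,\dots,a_n,b\in\LL$ with $a_i\preceq b$, and set $c=\infto(a_1,\dots,a_n)$. I will show that $c$ is a lower bound of the modified tuple $(a_1,\dots,b,\dots,a_n)$. For $j\neq i$ this is immediate, since $c\preceq a_j$. In position $i$ we have $c\preceq a_i\preceq b$, so $c\preceq b$ by transitivity. The universal property of the infimum of the modified tuple then yields
$$c\preceq \infto(a_1,\dots,a_{i-1},b,a_{i+1},\dots,a_n),$$
which is exactly $\preceq$-increasingness.

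For $\supto$, the dual argument applies. Set $d=\supto(a_1,\dots,b,\dots,a_n)$. Then $d$ is an upper bound of the original tuple, because $a_i\preceq b\preceq d$. Consequently $\supto(a_1,\dots,a_n)\preceq d$.

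There is no real obstacle here. The only points needing care are to invoke uniqueness of the infimum and supremum (via antisymmetry) in the idempotency step, and to use transitivity at the single modified coordinate in the monotonicity step. Alternatively, once monotonicity is established, idempotency could be derived from the preceding proposition, since $\infto$ is trivially $\preceq$-internal. However, the direct argument above is just as short.
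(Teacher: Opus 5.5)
Your proof is correct and follows essentially the same route as the paper. Idempotency comes from the infimum of identical elements being that element, and monotonicity comes from showing by transitivity that $\infto(a_1,\dots,a_n)$ is a lower bound of the modified tuple, with $\supto$ handled dually. You are slightly more careful than the paper: you justify uniqueness via antisymmetry, treat an arbitrary position $i$ rather than reducing ``without loss of generality'' to $i=1$, and write out the dual step for $\supto$.
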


\begin{proof}

\begin{itemize}
  \item  Let $\preceq$ be an order that induces a lattice structure on $\LL$.

  Firstly, it is clear that $\infto$ is an idempotent operator, since the infimum of a collection of identical elements is necessarily that element.

  Now, we are going to prove that $\infto$ is an $\preceq$-increasing operator. For this let $(\colec)\in\LL^n$ be a collection of intervals and let $[\undb,\ovb]$ be an interval such that $[\unda_i,\ova_i]\preceq[\undb,\ovb]$ for some $i\in\{1,\dots,n\}$. Without loss of generality, we are going to assume that $[\unda_1,\ova_1]\preceq[\undb,\ovb]$.

%
%
%

Thus, we have that $\infto(\colec)\preceq[\unda_i,\ova_i]\ \forall i\in\{1,\dots,n\}$ and so, by the transitivity of the order, we also have that $\infto(\colec)\preceq[\undb,\ovb]$.

Then, $\infto(\colec)\preceq\infto([\undb,\ovb],[\unda_2,\ova_2],\dots,[\unda_n,\ova_n])$
and this proves that the operator is increasing.

  \item The proof for $\supto$ is analogous to the previous one.
  \end{itemize}
\end{proof}

%
%
%
%

Since, for total orders, the minimum and the infimum coincide, it follows from the previous result that $Min_{\preceq_{ao}}$
  is a $\preceq_{ao}$-average for any total order $\preceq_{ao}$, and the same holds for the maximum. It also follows immediately that the operators
$\inflo$  and $\suplo$ are $\lator$-averages. However, it is important to emphasize that the order used to define the infimum or supremum is the same as the one with respect to which they act as average functions. These results do not generally hold under a different order, as illustrated in the following example.


\begin{example}
    The operator $\inflo: \LL^n\to \LL$ is not an $\preceq_{\alpha \beta}$-average for any $\alpha$, $\beta$ with $\alpha \neq \beta$, since it is not  $\preceq_{\alpha \beta}$-monotone.

Let us first set  $b, d$ two real values such that $\frac{1}{2}<d<b<d+\lambda_\alpha$ with
\[\lambda_\alpha=\left\{
\begin{array}{cl}
\frac{1-\alpha}{2\alpha} & \mbox{ if } \alpha\in (0,1)\\
\infty & \mbox{ otherwise}
\end{array}
\right.\]

    \begin{itemize}
    \item If $\alpha \neq 1$, it holds that $[0,b] \preceq_{\alpha \beta} [\frac{1}{2},d]$, since $\alpha b<(1-\alpha)\frac{1}{2}+\alpha d$, by considering that $b<d+\frac{1-\alpha}{2\alpha}$ if $\alpha >0$ and $0<(1-\alpha)\frac{1}{2}$ if $\alpha=0$.

    However,
   $\inflo([0,b+d],[\frac{1}{2},d]) = [0,d]  \prec_{\alpha \beta} [0,b] =
    \inflo([0,b+d],[0,b])$ since
$\alpha d < \alpha b$ if $\alpha>0$ and $\beta d < \beta b$ if $\alpha=0$, because this implies that $\beta>0$.


\item If $\alpha=1$, then  $[\frac{1}{2},d] \preceq_{1\beta}[0,b]$, since $d <  b$, but
$\inflo(\left[\frac{1}{4}, \frac{1+2d}{4}\right],[0,b])   \prec_{1 \beta}
    \inflo(\left[\frac{1}{4}, \frac{1+2d}{4}\right],[\frac{1}{2},d])$, by applying that
$\inflo(\left[\frac{1}{4}, \frac{1+2d}{4}\right],[\frac{1}{2},d]) =     \left[\frac{1}{4},\frac{1+2d}{4}\right] $ since
$\frac{1+2d}{4} < d \Leftrightarrow 1+2d < 4d  \Leftrightarrow 1< 2d  \Leftrightarrow 1/2<d$,
    $\inflo(\left[\frac{1}{4}, \frac{1+2d}{4}\right],[0,b]) = \left[0,\frac{1+2d}{4}\right]  $
since   $d<b$ and
$\left[0,\frac{1+2d}{4}\right]  \prec_{1 \beta}   \left[\frac{1}{4},\frac{1+2d}{4}\right] $.

         \end{itemize}

    \end{example}

In particular, this result indicates that the operator $\inflo$ is neither monotone with respect to any of the lexicographic orders nor with respect to the Xu and Yager order. The operator $\suplo$ exhibits the same behaviour.

\begin{example}
Let us consider $\alpha, \beta\in[0,1]$ with $\alpha \neq \beta$.
        \begin{itemize}

            \item If $\alpha\in(0,1)$, we consider a number $c<1/2$ and a number $a\in \left(c,\min\left\{\frac{1}{2},c+\frac{\alpha}{2(1-\alpha)}\right\}\right)$.

            It holds that $[a,\frac{1}{2}]\preceq_{\alpha,\beta}[c,1]$, since $(1-\alpha)a<(1-\alpha)c+\frac{\alpha}{2}$.
            But
            $\suplo([0,1],[c,1])=[c,1] \prec_{\alpha,\beta} [a,1] =
            \suplo([0,1],[a,\frac{1}{2}])$

            \item For $\alpha=0$, it holds that $[0,3] \preceq_{0 \beta} [1,2]$, but
         $\suplo([1,2],[1,2]) = [1,2] \prec_{0 \beta} [1,3]=
         \suplo([1,2],[0,3])
         $.

         \item For $\alpha=1$, it holds that $[1,1] \preceq_{1 \beta} [0,2]$, but
         $\suplo([0,2],[0,2]) = [0,2]\prec_{1\beta} [1,2] =
         \suplo([0,2],[1,1])
         $.
        \end{itemize}

    \end{example}

\subsection{The generalized power mean}

In this subsection we focus on the study of generalized $m$th power means for intervals. The first step is to define the powers of an interval.

\begin{definition}\label{def:secondpower}
    Given $[\unda,\ova]\in\LL$ and $m\in\naturales$ we define the $m$th power of the interval $[\unda,\ova]$ as

    $$[\unda,\ova]^m=\left\{\begin{array}{ll}
    [0,\maxi\{\unda^m,\ova^m\}]& \mbox{ if } 0\in [\unda,\ova]  \mbox{ and } m \mbox{ is an even number},\\[.2cm]
    \left[\mini\{\unda^m,\ova^m\},\maxi\{\unda^m,\ova^m\}\right] & \mbox{ otherwise.}
    \end{array}\right.
$$
\end{definition}

Because the power function increases when the exponent is odd, we have
$[\unda,\ova]^m = [\unda^m,\ova^m]$ in that case.

\begin{proposition}\label{prop:power}
    Given $[\unda,\ova]\in\LL$ and $m\in\naturales$ , we have that $[\unda_i,\ova_i]^m=[\mini_{x \in [\unda_i,\ova_i]}x^m,\maxi\{\unda_i^m,\ova_i^m\}]$.\end{proposition}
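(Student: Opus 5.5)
The plan is to prove that both endpoints of $[\unda,\ova]^m$, as given by Definition~\ref{def:secondpower}, agree with the claimed endpoints. The key fact is that on a closed interval the continuous function $x\mapsto x^m$ attains its maximum at an endpoint. It attains its minimum either at an endpoint or, in the even case, possibly at $0$. Throughout I write $[\unda,\ova]$ for the interval (the statement's subscript $i$ plays no role).

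\textbf{Upper endpoint.} Both branches of Definition~\ref{def:secondpower} have upper endpoint $\maxi\{\unda^m,\ova^m\}$, which is exactly the claimed upper endpoint. So this part is immediate and needs no use of the maximum fact.

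\textbf{Lower endpoint, $m$ odd.} Here $x\mapsto x^m$ is strictly increasing on $\RR$. Hence $\mini_{x\in[\unda,\ova]}x^m=\unda^m=\mini\{\unda^m,\ova^m\}$, which matches the ``otherwise'' branch of the definition.

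\textbf{Lower endpoint, $m$ even.} Here $x^m=|x|^m$ is decreasing on $(-\infty,0]$ and increasing on $[0,\infty)$. I split into three cases.
\begin{itemize}
\item If $0\in[\unda,\ova]$, then $x^m\ge 0=0^m$ for every $x$, so the minimum is $0$. This is the lower endpoint in the first branch.
\item If $0<\unda$, the function is increasing on $[\unda,\ova]$. The minimum is $\unda^m=\mini\{\unda^m,\ova^m\}$.
\item If $\ova<0$, the function is decreasing on $[\unda,\ova]$. The minimum is $\ova^m=\mini\{\unda^m,\ova^m\}$.
\end{itemize}
In the last two cases the ``otherwise'' branch applies, so the endpoints match.

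\textbf{Main obstacle.} The only point needing care is the even exponent with $0$ inside the interval. There the minimum is interior rather than at an endpoint, so one must check it equals $0$ and not $\mini\{\unda^m,\ova^m\}$. This is exactly why the definition treats that case separately. Everything else reduces to monotonicity of $x^m$ on half-lines.
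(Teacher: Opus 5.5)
Your proof is correct and follows essentially the same route as the paper: the upper endpoint is read directly off Definition~\ref{def:secondpower}, the odd case uses that $x^m$ is increasing, and the even case splits on whether $0\in[\unda,\ova]$, with monotonicity of $x^m$ on the interval handling the case where it is not. The only difference is that you split $0\notin[\unda,\ova]$ explicitly into $0<\unda$ and $\ova<0$, which the paper compresses into a single monotonicity remark.
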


\begin{proof}
For $m$ odd, as $x^m$ is an increasing function, we have that $[\unda_i,\ova_i]^m=[\unda_i^m,\ova_i^m]$. Since $f(x)=x^m$ (for $m$ odd) is increasing in $\reales$, the proof follows.

For $m$ even, the upper bound of $[\unda_i,\ova_i]^m$ is, by definition, $\maxi\{\unda_i^m,\ova_i^m\}$, so there is nothing to prove concerning the upper bound. Concerning the lower bound, recall that the function $f(x)=x^m$ is decreasing for $x<0$, increasing for $x>0$ and reaches a minimum for $x=0$. We distinguish two cases:
\begin{itemize}
    \item If $0 \in [\unda_i,\ova_i]$, by definition $[\unda_i,\ova_i]^m=[0,\maxi\{\unda_i^m,\ova_i^m\}]$. The proof follows from the fact that $f(x)=x^m$ reaches its minimum in $[\unda_i,\ova_i]$ for $x=0$. Therefore, $0=\mini_{x \in [\unda_i,\ova_i]}x^m$.
    \item If $0 \notin [\unda_i,\ova_i]$, by definition, $[\unda_i,\ova_i]^m=[\mini\{\unda_i^m,\ova_i^m\},\maxi\{\unda_i^m,\ova_i^m\}]$ the function $f(x)=x^m$ is monotone in $[\unda_i,\ova_i]$ and therefore, $\mini_{x \in [\unda_i,\ova_i]}x^m=\mini\{\unda_i^m,\ova_i^m\}$.
\end{itemize}

 \end{proof}

\begin{remark}
    Notice that for any natural $m$ with $m \in \naturales$, the $m$th power of an interval is a subset of the interval obtained from multiplying the elements of the original interval $m$ times, that is, $[\unda,\ova]^m=\{a^m \mid a\in[\unda,\ova]\}\subseteq\{a_1\cdot\, \dots \,\cdot a_m \mid a_1,\dots, a_m\in[\unda,\ova]\}$.

Moreover,  we get the equality for intervals that only contain positive (negative) values or $m=1$. However, the content can be strict for intervals including both positive and negative values and $m\ge 2$. For example,
  $
  [-1,2]^2=[0,4] \subset [-2,4]=\{a_1 \cdot a_2 \mid a_1, a_2 \in [-1,2]\}
  $.

\end{remark}

In a similar manner, we can also define the root of an interval as follows. Note that for even indices, the definition is only valid for intervals with non-negative values. Furthermore, we consider only the principal (positive) root. However, if the index of the root is odd, the root can be defined without imposing restrictions on the lower bound of the interval.

\begin{definition}
    Let $[\unda,\ova]\in\LL$ and  $m \in \naturales$.We define the $m$th root of $[\unda,\ova]$ as
    $$\sqrt[m]{[\unda,\ova]}=\left[\sqrt[m]{\unda},\sqrt[m]{\ova}\right]$$
where $\underline{a}$ has to be greater than or equal to $0$ if $m$ is even.
\end{definition}

Having established the definitions of the $m$th power and $m$th root for intervals, we are now prepared to define the power mean of order $m$ for interval-valued data.

\begin{definition}
Let $m$ be a natural number. The function $mPM:\LL^n\to\LL$ given by $$mPM(\colec)=\sqrt[m]{\frac{[\unda_1,\ova_1]^m+\dots+[\unda_n,\ova_n]^m}{n}}$$
is called generalized power mean of order $m$.
\end{definition}

This definition can be characterized through the next result.

\begin{corollary}\label{cor:mPM}
    Let $\colec\in\LL$ be a collection of intervals and $m \in \naturales$. Its generalized mean of order $m$ can be written as
   $$
mPM(\colec)=
\left[\sqrt[m]{\frac{\sum_{i=1}^n\displaystyle\mini_{x_i \in [\unda_i,\ova_i]}x_i^m}{n}},\sqrt[m]{\frac{\sum_{i=1}^n \displaystyle\maxi\{\unda_i^m,\ova_i^m\}}{n}}\right]\,.
$$
\end{corollary}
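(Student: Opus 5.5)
The plan is simply to unfold the definition of $mPM$ step by step, using Proposition~\ref{prop:power} for the powers, the definition of interval addition and nonnegative scalar multiplication for the averaging, and the definition of the $m$th root at the end.

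\textbf{Powers.} By Proposition~\ref{prop:power}, each power can be written as
$$[\unda_i,\ova_i]^m=\Big[\mini_{x_i\in[\unda_i,\ova_i]}x_i^m,\ \maxi\{\unda_i^m,\ova_i^m\}\Big].$$
These are genuine intervals, since the minimum of $x^m$ over $[\unda_i,\ova_i]$ is at most its value at an endpoint.

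\textbf{Sum and scaling.} Interval addition is defined endpointwise. By induction on $n$, the sum of the $n$ powers is therefore the interval whose lower endpoint is $\sum_i \mini_{x_i} x_i^m$ and whose upper endpoint is $\sum_i \maxi\{\unda_i^m,\ova_i^m\}$. Multiplying by $k=\frac1n>0$ uses the rule $k[\unda,\ova]=[k\unda,k\ova]$. This divides both endpoints by $n$ and preserves their order.

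\textbf{Root.} Finally, apply the definition $\sqrt[m]{[\unda,\ova]}=[\sqrt[m]{\unda},\sqrt[m]{\ova}]$ to the interval just obtained. This yields exactly the displayed formula.

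\textbf{Main obstacle: well-definedness of the root.} When $m$ is even, the definition of the $m$th root requires a nonnegative lower endpoint. This holds because $x^m\ge 0$ for even $m$, so each $\mini_{x_i} x_i^m\ge 0$, and hence their average is $\ge 0$. When $m$ is odd, no restriction is needed, and the odd root is increasing, so the resulting endpoints remain ordered.
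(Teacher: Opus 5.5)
Your proposal is correct and follows the same route as the paper: apply Proposition~\ref{prop:power} to each power, then unfold interval addition, scaling by $\frac{1}{n}$, and the $m$th root. You also place the well-definedness check correctly in the even case, since the lower endpoint is an average of values $x_i^m\ge 0$; the paper's closing remark says ``for $m$ odd'', which is evidently a slip.
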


\begin{proof}

The proof follows from Proposition \ref{prop:power} and  the definition of sum, product by a number and $m$th root of an interval. Observe that for $m$ odd the root is well defined since the radicand is an interval with lower bound greater than or equal to zero.
 \end{proof}

It is important to note that this characterization cannot be simplified for even powers. However, for odd powers, a simpler and more intuitive expression can be derived: when $m$ is an odd number, the lower and upper bounds of the generalized mean of order $m$ are the generalized means of order $m$ of the lower and upper bounds of the intervals being aggregated, respectively.
\begin{corollary}
    Let $m \in \naturales$ be an odd number. The generalized mean of order $m$ can be written as
   $$
   \begin{array}{rl}
mPM(\colec)   & =
\left[\sqrt[m]{\frac{\sum_{i=1}^n\unda_i^m}{n}},\sqrt[m]{\frac{\sum_{i=1}^n \ova_i^m}{n}}\right] \\[.4cm]
 & =\left[m\mbox{pm}(\unda_1,\dots,\unda_n), m\mbox{pm}(\ova_1,\dots,\ova_n)\right]\,.
 \end{array}
$$
\end{corollary}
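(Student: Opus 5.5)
The plan is to specialize Corollary~\ref{cor:mPM} to odd $m$, using the fact that for odd $m$ the real map $x\mapsto x^m$ is strictly increasing on all of $\reales$. There is then no case distinction of the kind Definition~\ref{def:secondpower} makes for even exponents.

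First, since $m$ is odd, Definition~\ref{def:secondpower} falls in its ``otherwise'' branch, because the first branch requires $m$ even. Monotonicity of $x^m$ gives $\unda_i^m\le \ova_i^m$, so $[\unda_i,\ova_i]^m=[\unda_i^m,\ova_i^m]$, as already observed after that definition. Equivalently, by Proposition~\ref{prop:power}, $\mini_{x_i\in[\unda_i,\ova_i]}x_i^m=\unda_i^m$ and $\maxi\{\unda_i^m,\ova_i^m\}=\ova_i^m$.

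Second, I substitute these two identities into the expression of Corollary~\ref{cor:mPM}. This gives the two radicands $\frac{1}{n}\sum_i\unda_i^m$ and $\frac{1}{n}\sum_i\ova_i^m$. Both are sums of intervals followed by multiplication by the positive scalar $1/n$, so the interval arithmetic of Section~\ref{sec:preliminaries} acts endpoint-wise and preserves order. I will note explicitly that for odd $m$ the $m$th root is defined for any real radicand, including negative ones. This is the only delicate point: the remark in the proof of Corollary~\ref{cor:mPM} about nonnegative radicands is not needed here, because odd roots require no sign restriction. Since $\sqrt[m]{\cdot}$ is increasing, the lower endpoint is still at most the upper one, so the result is a genuine interval. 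This yields the first equality.

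Finally, the second equality is just the definition of the real generalized $m$-power mean $m\mbox{pm}$ from Section~\ref{sec:average_number}, applied separately to the lower endpoints and to the upper endpoints. For odd $m$ the real-valued formula is meaningful for all real inputs, since we take the real odd root. I expect no real obstacle beyond being careful about the domain of the odd root.
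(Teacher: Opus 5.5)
Your proposal is correct and follows the route the paper takes: the paper states this corollary without a separate proof, as a direct specialization of Corollary~\ref{cor:mPM} using $[\unda_i,\ova_i]^m=[\unda_i^m,\ova_i^m]$ for odd $m$, with the real power means $m\mbox{pm}$ then read off at each endpoint. Your point that odd roots need no sign restriction is also right; in fact, the paper's remark about a nonnegative radicand in the proof of Corollary~\ref{cor:mPM} only makes sense for even $m$.
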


This definition is suitable if we consider intervals from an epistemic point of view, since it is the interval that contains all the possible generalized means of order $m$ of the possible outcomes.\\
Given the way in which we have defined arithmetics between intervals, it is also a suitable definition from an ontic point of view. It generalizes the definition for real numbers to intervals in a straightforward way.

Therefore, in the case where $m$ is odd, the generalized mean is a separable function whose lower and upper bounds are given by the generalized $m$-power means for real numbers ($m$pm). Idempotency follows directly from this property.

\begin{corollary}\label{cor:mPMidempotent}
 Let $m \in \naturales$ be an odd number. The operator $mPM:\LL^n\to\LL$ is idempotent.
\end{corollary}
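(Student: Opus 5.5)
The plan is to reduce the statement to Proposition~\ref{prop:idempotency_h<->fg}. By the preceding corollary, for $m$ odd we can write
$$mPM(\colec)=\left[m\mbox{pm}(\unda_1,\dots,\unda_n),\, m\mbox{pm}(\ova_1,\dots,\ova_n)\right].$$
So $mPM$ is a separable function in the sense of Definition~\ref{def:separable}, with lower and upper component functions both equal to $f=g=m\mbox{pm}:\RR^n\to\RR$.

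One subtlety needs checking first. For odd $m$, $m\mbox{pm}$ must be understood on all of $\RR^n$ using the real odd root, so that $\sqrt[m]{y}$ is defined for negative $y$. This agrees with the paper's definition of the root of an interval, which imposes no sign restriction when the index is odd. Under this reading $f$ and $g$ are genuinely maps $\RR^n\to\RR$. We also need that the pair of bounds actually forms an interval. This follows because $t\mapsto t^m$ and $t\mapsto\sqrt[m]{t}$ are increasing for odd $m$, so $\unda_i\le\ova_i$ gives $\sum_i\unda_i^m\le\sum_i\ova_i^m$ and hence lower bound $\le$ upper bound.

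Next I will show that $m\mbox{pm}$ is idempotent on $\RR$. For any $x\in\RR$,
$$m\mbox{pm}(x,\dots,x)=\sqrt[m]{\frac{n x^m}{n}}=\sqrt[m]{x^m}=x.$$
The last equality uses that $t\mapsto t^m$ is a bijection of $\RR$ for odd $m$ whose inverse is the real $m$th root. This holds for negative $x$ as well, which is exactly where odd parity matters: for even $m$ one would get $|x|$.

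Finally, Proposition~\ref{prop:idempotency_h<->fg} gives that $mPM$ is idempotent. One could equally check it directly, since $mPM([\unda,\ova],\dots,[\unda,\ova])=[\unda,\ova]$. The only step needing care is the sign and root convention for negative radicands; everything else is routine.
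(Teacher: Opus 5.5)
Your proposal is correct and follows essentially the paper's proof: $mPM$ is separable with both component functions equal to $m\mbox{pm}$, which is idempotent on $\RR$ for odd $m$, so Proposition~\ref{prop:idempotency_h<->fg} gives idempotency. Your checks on the real odd root and on the lower bound not exceeding the upper bound are details the paper leaves implicit.
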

\begin{proof}
  It follows from Proposition~\ref{prop:idempotency_h<->fg}, given that $mpm$ is idempotent in $\reales$.
\end{proof}

Thus, for $m$ an odd number, $mPM$ is an $\preceq$-average if and only if it is $\preceq$-increasing.

\begin{proposition}  \label{prop:su2} Let $m \in \naturales$ be an odd number.
    The operator $mPM:\LL^n\to\LL$ is a
$\lator$-average, a   $\lexunor$-average and a
  $\lexdosor$-average.
\end{proposition}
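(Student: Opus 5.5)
For $m$ odd, the preceding corollary writes $mPM$ as a separable function whose lower and upper component functions are both the real power mean $m\mbox{pm}$. Corollary~\ref{cor:mPMidempotent} already gives idempotency, so by Definition~\ref{def:intervalaverage} it only remains to prove that $mPM$ is $\lator$-, $\lexunor$- and $\lexdosor$-increasing. The plan is to reduce all three to monotonicity properties of $m\mbox{pm}$ on $\RR^n$, using Proposition~\ref{prop:monotonicity_lator} and Proposition~\ref{prop:monotonicity_lexunor}.

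The key real-valued fact is that, for odd $m$, the map
$$(x_1,\dots,x_n)\mapsto \sqrt[m]{\frac{1}{n}\sum_{i=1}^n x_i^m}$$
is strictly increasing in each variable on all of $\RR$. I would argue this as a composition. The map $t\mapsto t^m$ is strictly increasing on $\RR$ because $m$ is odd. Changing one $x_i$ to a larger $y$ therefore strictly increases the sum $\sum_i x_i^m$, and hence strictly increases the radicand. Finally, the odd root $t\mapsto\sqrt[m]{t}$ is well defined and strictly increasing on all of $\RR$. Strict monotonicity implies ordinary monotonicity, so $m\mbox{pm}$ is both increasing and strictly increasing.

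With this in hand, the three claims follow directly. Since $f=g=m\mbox{pm}$ are increasing, Proposition~\ref{prop:monotonicity_lator} gives that $mPM$ is $\lator$-increasing. Since $f$ is strictly increasing and $g$ is increasing, Proposition~\ref{prop:monotonicity_lexunor}~i) gives that $mPM$ is $\lexunor$-increasing. Symmetrically, since $g$ is strictly increasing and $f$ is increasing, Proposition~\ref{prop:monotonicity_lexunor}~ii) gives that $mPM$ is $\lexdosor$-increasing. Combining each of these with idempotency shows that $mPM$ is an average with respect to each of the three orders.

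The only step needing care is checking that the real power mean is well defined and strictly monotone on the whole real line, including negative arguments. The remark in the paper that "only the positive root" is taken for even powers does not matter here, because for odd $m$ the root is the unique real root. I would also make explicit that the separable representation requires $m\mbox{pm}(\unda_1,\dots,\unda_n)\le m\mbox{pm}(\ova_1,\dots,\ova_n)$, so that $mPM$ really lands in $\LL$. This follows from the $\lator$-monotonicity argument applied coordinatewise, since $\unda_i\le\ova_i$ for every $i$.
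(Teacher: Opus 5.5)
Your proposal is correct and follows the paper's own proof. Idempotency comes from Corollary~\ref{cor:mPMidempotent}, and the three monotonicity claims come from Propositions~\ref{prop:monotonicity_lator} and~\ref{prop:monotonicity_lexunor}, applied to the separable form with $f=g=m\mbox{pm}$ and the fact that $m\mbox{pm}$ is strictly increasing on $\RR^n$ for odd $m$; your extra checks (odd powers and roots on all of $\RR$, and lower bound not exceeding upper bound) fill in details the paper leaves implicit.
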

\begin{proof}
    Since $mpm$ is strictly increasing, it follows from Propositions~\ref{prop:monotonicity_lator} and~\ref{prop:monotonicity_lexunor} that $mPM$ is $\lator$-increasing, $\lexunor$-increasing and  $\lexdosor$-increasing.
    Since it is also idempotent by Corollary~\ref{cor:mPMidempotent}, the proof follows.
\end{proof}

The previous proposition does not hold if $m$ is an even number. Neither idempotency nor monotony can be assured as the two following counterexamples show.

\begin{example}
    The operator $mPM:\LL^n\to\LL$ is not idempotent for any $m \in \naturales$ if $m$ is even. To  prove that, we can consider the interval $[-1,1]$. For any even number $m$, it holds that $[-1,1]^m=[0,1]$, and so
$$
mPM([-1,1],\stackrel{n}{\dots},[-1,1])=\sqrt[m]{\frac{[0,1]+\stackrel{n}{\dots}+[0,1]}{n}}=\sqrt[m]{[0,1]}=[0,1]\neq [-1,1].
$$
\end{example}

\begin{example} \label{exem:su3}
    The operator $mPM:\LL^n\to\LL$ is not $\lator$-increasing for any $m \in \naturales$ if $m$ is even. In this case, we can consider the intervals $[0.5,1]$, $[-1,0.1]$ and $[0,0.75]$. It is clear that $[-1,0.1] \lator [0,0.75]$, but  $m$PM$([0.5,1],[-1,0.1])=\left[\sqrt[m]{\frac{0.5^m}{2}},1\right]$ whereas   $m$PM$([0.5,1],[0,0.75])=\left[\sqrt[m]{\frac{0.5^m}{2}},\sqrt[m]{\frac{1+0.75^m}{2}}\right]$. Thus,  for every $m\geq 2$ it holds that $\left[\sqrt[m]{\frac{0.5^m}{2}},\sqrt[m]{\frac{1+0.75^m}{2}}\right]\prec_{Lo}\left[\sqrt[m]{\frac{0.5^m}{2}},1\right]$ given that $\sqrt[m]{\frac{1+0.75^m}{2}}<1$ for every $m \ge2$. Then, the operator $m$PM is not $\lator$-increasing if $m$ is even.
\end{example}

As a consequence, for even indexes $mPM$ is not $\ador$-increasing for any admissible order $\ador$.

\begin{proposition}
The operator $mPM:\LL^n\to\LL$ is:
\begin{itemize}
\item a
$\lator$-average, a   $\lexunor$-average and a
  $\lexdosor$-average if $m$ is odd and $m\ge 3$,
  \item  neither a $\lator$-average nor a $\ador$-average for any admissible order if $m$ is an even number,
  \item a
$\lator$-average and an $\preceq_{\alpha\beta}$-average for every $\alpha,\beta \in [0,1]$ with $\alpha\neq \beta$ if $m=1$.
\end{itemize}

\end{proposition}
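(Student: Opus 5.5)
The proof splits into three parts, following the three items of the statement. The first two items are essentially already established; the new content is the case $m=1$ for the orders $\preceq_{\alpha\beta}$.

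\emph{Odd $m\ge 3$.} This is Proposition~\ref{prop:su2} restricted to $m\ge 3$. $mPM$ is separable with both components equal to $m$pm. The real power mean with odd $m$ is strictly increasing in each variable, and it is idempotent. Propositions~\ref{prop:idempotency_h<->fg}, \ref{prop:monotonicity_lator} and~\ref{prop:monotonicity_lexunor} then give idempotency and monotonicity for $\lator$, $\lexunor$ and $\lexdosor$.

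\emph{Even $m$.} The example with $[-1,1]$ shows that $mPM$ is not idempotent, since $mPM([-1,1],\dots,[-1,1])=[0,1]\neq[-1,1]$. Because both notions of average require idempotency, this already rules out $mPM$ being a $\lator$-average or a $\ador$-average for any order. Example~\ref{exem:su3} adds a second reason: there $[-1,0.1]\lator[0,0.75]$ but the images satisfy $h(\dots)\prec_{Lo}h(\dots)$ strictly in the reverse direction. Since $\ador$ extends $\lator$ and is antisymmetric, the same pair also violates $\ador$-monotonicity.

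\emph{The case $m=1$.} Here $1PM(a_1,\dots,a_n)=\frac1n\sum a_i$, taken endpointwise. Idempotency comes from Corollary~\ref{cor:mPMidempotent}, and $\lator$-monotonicity from Proposition~\ref{prop:monotonicity_lator}. The key step, and the only one needing a new idea, is $\preceq_{\alpha\beta}$-monotonicity. Proposition~\ref{prop:monotonicity_lexunor} does not cover it, and the $\preceq_{XY}$ example shows that separability with strictly increasing components is not enough in general.

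The plan is to use linearity. Each $K_\gamma(\underline x,\overline x)=(1-\gamma)\underline x+\gamma\overline x$ is linear, so for $h=1PM$ with $a_1$ replaced by $b$ we get
$$K_\gamma(h(b,a_2,\dots,a_n))-K_\gamma(h(a_1,a_2,\dots,a_n))=\tfrac1n\bigl(K_\gamma(b)-K_\gamma(a_1)\bigr)$$
for $\gamma\in\{\alpha,\beta\}$, writing $K_\gamma(a)$ for $K_\gamma(\unda,\ova)$.

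Now suppose $a_1\preceq_{\alpha\beta}b$. Then either $K_\alpha(a_1)<K_\alpha(b)$, or $K_\alpha(a_1)=K_\alpha(b)$ and $K_\beta(a_1)\le K_\beta(b)$. Multiplying by $\tfrac1n>0$ preserves each of these strict and non-strict inequalities and equalities, so the same disjunction holds for the images. Hence $h(a_1,\dots,a_n)\preceq_{\alpha\beta}h(b,a_2,\dots,a_n)$, and the other coordinates follow by symmetry.

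The only point to check is that the $K_\gamma$ of an interval sum really is the sum of the $K_\gamma$ values. This holds because interval addition and positive scaling act endpointwise and preserve the ordering of the endpoints.
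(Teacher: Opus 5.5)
Your proposal is correct and follows the paper's proof. You take the first item from Proposition~\ref{prop:su2}, the second from the even-$m$ counterexamples, and the third from the linearity of $K_\alpha$ and $K_\beta$ under the endpointwise mean. There are two small deviations: you treat $\alpha\in\{0,1\}$ with the same linearity computation instead of deferring to Proposition~\ref{prop:monotonicity_lexunor}, and for even $m$ you lead with the non-idempotency example $[-1,1]$, which excludes being an average for every order at once, with Example~\ref{exem:su3} as the secondary argument.
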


\begin{proof}
\begin{itemize}
\item This was proven in Proposition \ref{prop:su2}.
\item It is an immediate consequence of Example \ref{exem:su3}.

\item The property for the lattice order was proven in Proposition \ref{prop:su2}.

For the generated admissible orders,
we know from Corollary~\ref{cor:mPMidempotent} that
 $1$PM is an idempotent function. Then, it suffices to prove that $1$PM is $\preceq_{\alpha \beta}$-increasing.\\
Let $\alpha\in (0,1)$ (the cases $\alpha=0$ and $\alpha=1$ have been proven in Proposition~\ref{prop:monotonicity_lexunor}) and let $\colec,[\undb,\ovb]\in\LL$. Assume $[\unda_i,\ova_i]\preceq_{\alpha \beta}[\undb,\ovb]$ for some $i\in\{1,\dots,n\}$. Since $1$PM is a symmetric operator, we can assume without loss of generality that $i=1$. Hence it holds that $K_{\alpha}([\unda_1,\ova_1])<K_{\alpha}([\undb,\ovb])$ or $K_{\alpha}([\unda_1,\ova_1])=K_{\alpha}([\undb,\ovb])$ and $K_{\beta}([\unda_1,\ova_1])<K_{\beta}([\undb,\ovb])$ for every $\beta \neq \alpha$. Since we can rewrite $1\mbox{PM}(\colec)$ as
$$
\left[\frac{\unda_1}{n}+\frac{\unda_2+\dots+\unda_n}{n},
\frac{\ova_1}{n}+\frac{\ova_2+\dots+\ova_n}{n}\right]
$$
if we evaluate the operator $K_{\alpha}$ on the mean of the intervals $\colec$, we get
$$
(1-\alpha)\frac{\unda_1}{n}+\alpha \frac{\ova_1}{n}+(1-\alpha)\left(\frac{\unda_2+\dots+\unda_n}{n}\right)+ \alpha\left(\frac{\ova_2+\dots+\ova_n}{n}\right)
$$

Similarly, if we evaluate the operator $K_{\beta}$ we get
\[
(1-\beta)\frac{\unda_1}{n}+\beta \frac{\ova_1}{n}+(1-\beta)\left(\frac{\unda_2+\dots+\unda_n}{n}\right)+ \beta\left(\frac{\ova_2+\dots+\ova_n}{n}\right)
\]
Now, since $[\unda_1,\ova_1] \preceq_{\alpha \beta} [\undb,\ovb]$ it holds that $(1-\alpha)\unda_1 + \alpha \ova_1<(1-\alpha)\undb+\alpha \ovb$ or $(1-\alpha)\unda_1+\alpha \ova_1=(1-\alpha)\undb + \alpha \ovb$ and $(1-\beta)\unda_1+\beta \ova_1\leq (1-\beta)\undb + \beta \ovb$. If we replace $[\unda_1,\ova_1]$ by $[\undb,\ovb]$, it follows clearly that\\
$(1-\alpha)\frac{\unda_1}{n} + \alpha\frac{\ova_1}{n} + (1-\alpha)\left(\frac{\unda_2+\dots+\unda_n}{n}\right)+ \alpha\left(\frac{\ova_2+\dots+\ova_n}{n}\right)<(1-\alpha)\frac{\undb}{n}+\alpha\frac{\ovb}{n}+(1-\alpha)\left(\frac{\unda_2+\dots+\unda_n}{n}\right)+\alpha \left(\frac{\ova_2+\dots+\ova_n}{n}\right)$\\
or\\
$
(1-\alpha)\frac{\unda_1}{n} + (1-\alpha)\left(\frac{\unda_2+\dots+\unda_n}{n}\right)+ \alpha\frac{\ova_1}{n} +\alpha\left(\frac{\ova_2+\dots+\ova_n}{n}\right)=
(1-\alpha)\frac{\undb}{n}+(1-\alpha)\left(\frac{\unda_2+\dots+\unda_n}{n}\right)+\alpha\frac{\ovb}{n}+\alpha \left(\frac{\ova_2+\dots+\ova_n}{n}\right)
$\\
and\\
$
(1-\beta)\frac{\unda_1}{n} + (1-\beta)\left(\frac{\unda_2+\dots+\unda_n}{n}\right)+ \beta\frac{\ova_1}{n} +\beta\left(\frac{\ova_2+\dots+\ova_n}{n}\right)\le
(1-\beta)\frac{\undb}{n}+(1-\beta)\left(\frac{\unda_2+\dots+\unda_n}{n}\right)+\beta\frac{\ovb}{n}+\beta \left(\frac{\ova_2+\dots+\ova_n}{n}\right)
$

Thus, $
1PM([\unda_1,\ova_1],[\unda_2,\ova_2],\dots,[\unda_n,\ova_n])\preceq_{\alpha \beta}
1PM([\undb,\ovb],[\unda_2,\ova_2],\dots,[\unda_n,\ova_n])
$.

\end{itemize}
\end{proof}

Furthermore, the lexicographic orders of types 1 and 2 are the only instances of the $\preceq_{\alpha \beta}$-orders defined in~\cite{Bustince} for which it holds true if $m\neq 1$.

\begin{example} We will provide an example to show that the operator $m$PM$:\LL^2\to \LL$, with $m \geq 2$, is not $\preceq_{\alpha \beta}$-increasing for any $\alpha \in (0,1)$ and $\beta \neq \alpha$. Thus, we will prove that there exist $[a_2,b_2], [a_2', b_2']\in \LL$ such that
 $
 \terv{a}{b}{2} \preceq_{\alpha \beta} \terv{a'}{b'}{2}
 $,
but
$m\textrm{PM}([0,b],\terv{a'}{b'}{2}) \prec_{\alpha \beta}
m\textrm{PM}([0,b],\terv{a}{b}{2})$.

First, consider the function
$f(x)=\sqrt[m]{k+x^m}-x$ defined in $[0,\infty)$ with $k>0$.
The derivative of $f$ is
$$
f'(x)=\frac{1}{m}(k+x^m)^{\frac{1-m}{m}}(m x^{m-1})-1\,,
$$
then $f'(x)<0$ is equivalent to
$$
\frac{1}{m}(k+x^m)^{\frac{1-m}{m}}(m x^{m-1})<1.
$$
This is equivalent to
$$
\frac{x^{m-1}}{\sqrt[m]{(k+x^m)^{m-1}}}<1 \Leftrightarrow x^{m-1} <\sqrt[m]{(k+x^m)^{m-1}}
$$
raising to the  $m$th-power,
$$
(x^m)^{m-1} < (k+x^m)^{m-1} \qquad \Leftrightarrow \qquad x^m < k+x^m
$$
which holds for any $x \ge 0$ since $k>0$. Therefore, the function $f$ is strictly decreasing in $[0,\infty)$.

Secondly,  let us consider $b>0$ and $b_2>1$. As $\alpha < 1$. then $1-\alpha>0$ and therefore there exists $b'_2$ with $b_2<b_2'<b_2+(1-\alpha)$. If we consider $k=b^m$, we have that $f(b_2')<f(b_2) $, that is, $\sqrt[m]{k+{b'}_2^m} - {b'}_2 < \sqrt[m]{k + b_2^m} - b_2$, which is equivalent to $\left(\sqrt[m]{b^m+{b'}_2^m} - \sqrt[m]{b^m + b_2^m}\right) < {b'}_2-b_2$.

From here, for any $a_2'$, we have that
$${a'}_2 + \frac{\al}{1-\al}\left[\sqrt[m]{b^m+{b'}_2^m} - \sqrt[m]{b^m + b_2^m}\right] < {a'}_2 + \frac{\al}{1-\al}({b'}_2-b_2)$$

Thus, let $a_2$ be a number such that
$${a'}_2 + \frac{\al}{1-\al}\left(\sqrt[m]{b^m+{b'}_2^m} - \sqrt[m]{b^m + b_2^m}\right) < a_2< {a'}_2 + \frac{\al}{1-\al}({b'}_2-b_2).$$

From $a_2< {a'}_2 + \frac{\al}{1-\al}({b'}_2-b_2)$ we obtain that
$(1-\alpha)a_2+\alpha b_2 < (1-\alpha) {a'}_2+\alpha {b'}_2$. If we consider ${a'}_2={b'}_2-1$, we  have that ${a'}_2>0$ and  that $$a_2 < ({b'}_2-1) + \frac{\al}{1-\al}({b'}_2-b_2) =\frac{ {b'}_2-\alpha b_2}{1-\alpha}-1< \frac{[b_2+(1-\alpha)]-\alpha b_2}{1-\alpha}-1 = b_2.
$$
As $a_2<b_2$ and ${a'}_2<{b'}_2$, from $(1-\alpha)a_2+\alpha b_2 < (1-\alpha) {a'}_2+\alpha {b'}_2$ we obtain that $
 \terv{a}{b}{2} \preceq_{\alpha \beta} \terv{a'}{b'}{2}
 $.

 Moreover, as ${a'}_2 + \frac{\al}{1-\al}\left(\sqrt[m]{b^m+{b'}_2^m} - \sqrt[m]{b^m + b_2^m}\right) < a_2$ and ${a'}_2>0$, we also have that $a_2>0$.

On the other hand,
$$m\textrm{PM}([0,b],\terv{a}{b}{2}) = (1-\al)\sqrt[m]{\frac{0^m+a_2^m}{2}} + \al\sqrt[m]{\frac{b^m+b_2^m}{2}} = (1-\alpha) \frac{a_2}{\sqrt[m]{2}} + \alpha \frac{\sqrt[m]{b^m+b_2^m}}{\sqrt[m]{2}}$$
and
$$m\textrm{PM}([0,b],\terv{a'}{b'}{2})  = (1-\al)\sqrt[m]{\frac{0^m+{a'}_2^m}{2}} + \al\sqrt[m]{\frac{b^m+{b'}_2^m}{2}}= (1-\alpha) \frac{{a'}_2}{\sqrt[m]{2}} + \alpha \frac{\sqrt[m]{b^m+{b'}_2^m}}{\sqrt[m]{2}}$$
 Thus, since $\alpha\in (0,1)$ and ${a'}_2 + \frac{\al}{1-\al}\left(\sqrt[m]{b^m+{b'}_2^m} - \sqrt[m]{b^m + b_2^m}\right) < a_2$, we obtain that
$$m\textrm{PM}([0,b],\terv{a'}{b'}{2}) \prec_{\alpha\beta} m\textrm{PM}([0,b],\terv{a}{b}{2}),$$
 that is, $m$PM is not $\preceq_{\alpha \beta}$-increasing.

\end{example}

The counterexample is valid in particular for $\alpha=\frac{1}{2}, \beta > \alpha$, this is, we have as a particular case that the $m$PM operator is not a $\xuyor$-average.

A final remark regarding the $m$th power mean is that the generalized power mean of order $1$ corresponds to the arithmetic mean for intervals, following the standard terminology used in Statistics.

The problematic behavior in the even case is resolved  when considering intervals composed of non-negative numbers.

\begin{proposition}\label{prop:mPMseparable}
The operator $m$PM$:\LLposit^n\to\LL$ is separable with component functions $f=g=mpm$ for any $m \in \naturales$.
\end{proposition}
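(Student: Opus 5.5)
The plan is to compute $m$PM on $\LLposit^n$ directly from Corollary~\ref{cor:mPM}, using the fact that for non-negative intervals both endpoint expressions reduce to the endpoints themselves. Everything then follows from the monotonicity of $x\mapsto x^m$ on $[0,\infty)$.

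First fix $a_i=[\unda_i,\ova_i]\in\LLposit$, so $0\le\unda_i\le\ova_i$. On $[0,\infty)$ the map $x\mapsto x^m$ is increasing for every $m\in\naturales$, whatever the parity of $m$. Hence $\mini_{x_i\in[\unda_i,\ova_i]}x_i^m=\unda_i^m$ and $\maxi\{\unda_i^m,\ova_i^m\}=\ova_i^m$.

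The only point needing care is even $m$ with $\unda_i=0$. Then $0\in[\unda_i,\ova_i]$, and Definition~\ref{def:secondpower} gives the lower bound $0$, which is still $\unda_i^m$. This agrees with Proposition~\ref{prop:power}, so $[\unda_i,\ova_i]^m=[\unda_i^m,\ova_i^m]$ in all cases.

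Substituting into Corollary~\ref{cor:mPM} gives
$$
mPM(a_1,\dots,a_n)=\left[\sqrt[m]{\tfrac{1}{n}\textstyle\sum_{i=1}^n\unda_i^m},\ \sqrt[m]{\tfrac{1}{n}\textstyle\sum_{i=1}^n\ova_i^m}\right].
$$
Both radicands are non-negative, so the $m$th root is well defined even for even $m$, taking the principal root. The two endpoints are exactly $m\mbox{pm}(\unda_1,\dots,\unda_n)$ and $m\mbox{pm}(\ova_1,\dots,\ova_n)$.

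By Definition~\ref{def:separable}, this means $m$PM is separable with $f=g=m\mbox{pm}$. Strictly, Definition~\ref{def:separable} asks for $f,g$ defined on $\RR^n$, whereas here only non-negative arguments occur. I would either extend $m\mbox{pm}$ to $\RR^n$ (e.g.\ by $\sqrt[m]{\frac1n\sum|x_i|^m}$, or by any extension, since only values on $[0,\infty)^n$ are used) or read the definition restricted to the domain $\LLposit^n$.

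I do not expect any real obstacle. The one delicate step is this domain bookkeeping, together with checking the boundary case $\unda_i=0$ for even $m$.
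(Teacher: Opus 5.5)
Your proof is correct and follows the paper's argument. On $\LLposit$ one has $[\unda_i,\ova_i]^m=[\unda_i^m,\ova_i^m]$ for every $m$, and substituting yields the endpoints $m\mbox{pm}(\unda_1,\dots,\unda_n)$ and $m\mbox{pm}(\ova_1,\dots,\ova_n)$.

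Your extra care with the case $\unda_i=0$ and with domains is fine. No extension of $m\mbox{pm}$ is needed, though: it is already defined on all of $\RR^n$, since odd roots always exist and for even $m$ the radicand $\frac{1}{n}\sum_i x_i^m$ is non-negative. The only real mismatch is that $m$PM lives on $\LLposit^n$ rather than $\LL^n$, which your restricted reading of Definition~\ref{def:separable} covers.
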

\begin{proof}
Observe that if $[\unda,\ova]\in\LLposit$, then $[\unda,\ova]^m=[\unda^m,\ova^m]$ for any $m \ge 2$ and this implies that
\[
m\mbox{PM}(\colec)=\left[\sqrt[m]{\frac{\sum_{i=1}^n \unda_i^m}{n}},\sqrt[m]{\frac{\sum_{i=1}^n \ova_i^m}{n}}\right].
\]
\end{proof}

\begin{proposition}
    Let $m\in \naturales$.  The operator $m$PM$:\LLposit^n\to\LL$ is a
 $\lator$-average,  $\lexunor$-average and
 $\lexdosor$-average.
\end{proposition}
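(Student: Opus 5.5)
The plan is to reduce everything to the real-valued power mean $m$pm, using the separability established in Proposition~\ref{prop:mPMseparable}. On $\LLposit^n$, $m$PM is separable with lower and upper component functions $f=g=m\mathrm{pm}$, restricted to non-negative arguments. Hence the three average properties follow from the transfer results for separable functions, once we check that those results still hold when the domain is $\LLposit$ rather than $\LL$.

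First, idempotency. Following Proposition~\ref{prop:idempotency_h<->fg}, I would check that $m$pm is idempotent on $[0,\infty)$. For $x\ge 0$,
\[
\sqrt[m]{\tfrac{n x^m}{n}}=\sqrt[m]{x^m}=x,
\]
using the principal root, which is valid because $x\ge 0$. Then for $[\unda,\ova]\in\LLposit$ we get
\[
m\mathrm{PM}([\unda,\ova],\dots,[\unda,\ova])=[\unda,\ova].
\]

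Second, monotonicity. On $[0,\infty)^n$ the map $x\mapsto x^m$ is strictly increasing, and so is the principal $m$th root on $[0,\infty)$. Therefore $m$pm is strictly increasing in each variable on non-negative inputs: if $0\le x_i<y$, then $\sum_j x_j^m$ strictly increases, and so does its root.

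Next I would apply Propositions~\ref{prop:monotonicity_lator} and~\ref{prop:monotonicity_lexunor}. Their proofs only compare values of $f$ and $g$ at argument tuples taken from the endpoints of the intervals involved. When all intervals lie in $\LLposit$, those endpoints are non-negative, so only the behaviour of $m$pm on $[0,\infty)^n$ matters. The sufficiency directions therefore apply verbatim. Increasingness of $f,g$ gives $\lator$-increasing. Strict increasingness of $f$ together with increasingness of $g$ gives $\lexunor$-increasing. The symmetric condition gives $\lexdosor$-increasing.

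The main point needing care is exactly this domain restriction. The earlier propositions are stated for $h:\LL^n\to\LL$ with $f,g:\RR^n\to\RR$, whereas here $h$ is defined only on $\LLposit^n$ and $m$pm is meaningful only on non-negative inputs for even $m$. I would handle this with a one-line remark that the cited proofs carry over unchanged. Alternatively, I would rewrite the short case analysis for $\lexunor$ directly. Suppose $[\unda_1,\ova_1]\lexunor[\undb,\ovb]$. If $\unda_1<\undb$, strict increase of $m$pm gives a strictly smaller lower bound. If $\unda_1=\undb$, the lower bounds coincide, and $\ova_1\le\ovb$ gives an upper bound that is at most the other. The $\lexdosor$ case is symmetric. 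Combining idempotency with the three monotonicity statements and Definition~\ref{def:intervalaverage} concludes the proof.
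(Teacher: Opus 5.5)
Your proposal is correct and follows the paper's route: separability of $m$PM on $\LLposit^n$ with $f=g=m$pm (Proposition~\ref{prop:mPMseparable}), idempotency and strict increase of $m$pm on non-negative inputs, and then Propositions~\ref{prop:idempotency_h<->fg}, \ref{prop:monotonicity_lator} and~\ref{prop:monotonicity_lexunor}. Your extra remark fills a point the paper leaves implicit: the sufficiency directions of those transfer results use only endpoint values, so they remain valid when the domain is restricted to $\LLposit$.
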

\begin{proof} As proven in Proposition~\ref{prop:mPMseparable}, $mPM$ is separable with $f=g=m\mbox{pm}$.
The operator $m\mbox{pm}:\reales^+ \to \reales$ is an idempotent and strictly increasing function. Therefore the proof follows from Propositions~\ref{prop:idempotency_h<->fg},~\ref{prop:monotonicity_lator} and~\ref{prop:monotonicity_lexunor}.
\end{proof}

\subsection{The median}

We have already examined several location measures for intervals such as the maximum and minimum (when defined), as well as the supremum and infimum and central tendency measures, including power means, with the arithmetic mean as a particular case. Another classical average function for real numbers is the median. We now proceed to formalize the concept of the median for intervals.

 As it happens with other measures of position, as the minimum and maximum, if we consider an ontic point of view, intervals should be ordered before determining the median. Total orders are the ones that guarantee that we can arrange any finite set of intervals. In this case the natural definition of median is the following one.




\begin{definition}
  Given a total order $\preceq_{to}$ on $\LL$ and a collection of intervals $\{a_1,\dots,a_n\}$ also in $\LL$, we denote $\{a_{\sigma(1)},\dots, a_{\sigma(n)}\}$ the reorganization of the intervals that satisfies
  $ a_{\sigma(1)} \preceq_{to} a_{\sigma(2)} \preceq_{to} \dots \preceq_{to}
  a_{\sigma(n)}$.
  Then the median according to $\preceq_{to}$, denoted $Median_{\preceq_{to}}$ of these intervals is
  \begin{itemize}
    \item $a_{\sigma(\frac{n+1}{2})}$ if $n$ is odd,
    \item $\left[\frac{\underline{a}_{\sigma(\frac{n}{2})}+ \underline{a}_{\sigma(\frac{n}{2}+1)}}{2},\frac{\overline{a}_{\sigma(\frac{n}{2})}+ \overline{a}_{\sigma(\frac{n}{2}+1)}}{2}\right]$ if $n$ is even.
  \end{itemize}
\end{definition}

By definition, it is immediate to prove the following proposition.

\begin{proposition}
  For any total order $\preceq_{to}$ on $\LL$,   $Median_{\preceq_{to}}$ is a $\preceq_{to}$-average.
\end{proposition}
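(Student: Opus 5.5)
The plan is to verify the two defining properties of a $\preceq_{to}$-average (Definition~\ref{def:intervalaverage}), idempotency and $\preceq_{to}$-monotonicity, directly from the definition of $Median_{\preceq_{to}}$. Throughout I write $a_{(k)}$ for the $k$-th element of the ordered rearrangement $a_{\sigma(1)}\preceq_{to}\dots\preceq_{to}a_{\sigma(n)}$. I expect the odd case to go through for an arbitrary total order, but not the even case.

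\textbf{Idempotency.} If $a_1=\dots=a_n=a=[\unda,\ova]$, then every order statistic equals $a$. For $n$ odd the median is $a_{(\frac{n+1}{2})}=a$. For $n$ even it is $\left[\frac{\unda+\unda}{2},\frac{\ova+\ova}{2}\right]=a$. So $Median_{\preceq_{to}}$ is idempotent for every total order.

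\textbf{Monotonicity, odd case.} The first step is a lemma on order statistics. Suppose $a_i\preceq_{to}b$, and let $b_{(k)}$ be the order statistics of the tuple obtained by replacing $a_i$ with $b$. I claim $a_{(k)}\preceq_{to}b_{(k)}$ for every $k$. To prove it, use the characterization
$$a_{(k)}=\min_{\preceq_{to}}\{x\in\{a_1,\dots,a_n\}:\ \#\{j: a_j\preceq_{to}x\}\ge k\},$$
taken with multiplicity. Equivalently, if at least $n-k+1$ of the entries are $\succeq_{to}c$, then $a_{(k)}\succeq_{to}c$. Now take any $c\succ_{to}b_{(k)}$. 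At least $k$ of the new entries are $\preceq_{to}b_{(k)}\prec_{to}c$. Replacing $b$ back by $a_i\preceq_{to}b$ does not decrease the number of entries below $c$, so at least $k$ of the old entries are $\prec_{to}c$, which gives $a_{(k)}\prec_{to}c$. Since the order is total, this yields $a_{(k)}\preceq_{to}b_{(k)}$. Applying the lemma with $k=\frac{n+1}{2}$ settles the odd case, using only totality and transitivity.

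\textbf{Monotonicity, even case.} This is the main obstacle, and the statement as worded does not follow from totality alone at this step. By the lemma, $c:=a_{(\frac n2)}\preceq_{to}b_{(\frac n2)}=:c'$ and $d:=a_{(\frac n2+1)}\preceq_{to}b_{(\frac n2+1)}=:d'$. One then needs the midpoint $\frac{c+d}{2}\preceq_{to}\frac{c'+d'}{2}$, where $\frac{c+d}{2}$ denotes the interval $\left[\frac{\underline{c}+\underline{d}}{2},\frac{\overline{c}+\overline{d}}{2}\right]$. A first reduction is available. A single replacement moves $b$ into at most one new rank, so either $(c',d')=(c,d)$, which is trivial, or one can sort out which of the two central positions actually changed. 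However, the case $c=c'$, $d\prec_{to}d'$ still requires $\frac{c+d}{2}\preceq_{to}\frac{c+d'}{2}$.

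That step needs translation invariance of the order, meaning that $u\preceq_{to}v$ implies $u+w\preceq_{to}v+w$ and $\lambda u\preceq_{to}\lambda v$ for $\lambda>0$. This holds for $\lator$ and for every $\preceq_{\alpha\beta}$, because $K_\alpha$ and $K_\beta$ are linear, which is the same argument used above for $1$PM. It does not hold for an arbitrary total order on $\LL$. Since totality does not constrain how an order interacts with the interval arithmetic at all, I expect there to be a total order violating the even case, for instance one obtained by transporting the order of $\RR$ along a suitably chosen bijection $\LL\to\RR$. I have not constructed such a counterexample, so this remains an expectation.

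My plan is therefore to prove idempotency in general, the odd case via the order-statistic lemma for every total order, and the even case under the additional condition that $\preceq_{to}$ is compatible with addition and positive scaling. For the even case as stated, I would first look for a counterexample order before attempting any general argument.
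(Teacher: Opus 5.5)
Your idempotency argument and your order-statistic lemma are correct. Together they already prove more than the paper does, since the paper only asserts that the proposition is ``immediate by definition''. The lemma settles odd $n$ for every total order.

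Your hesitation about even $n$ is not a gap in your reasoning. The statement is false there, so no argument from totality can close it. For $n=2$ one has $Median_{\preceq_{to}}(a_1,a_2)=\frac{a_1+a_2}{2}$ whatever the order. The claim would therefore make the interval arithmetic mean increasing for every total order, which is false.

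Your suggested transport construction works. Let $\phi:\LL\to\LL$ swap $[1,1]$ and $[-1,-1]$ and fix every other interval, and put $u\preceq^{*}v$ iff $\phi(u)\lexunor\phi(v)$. This is a total order with $[0,0]\prec^{*}[2,2]$. Yet $Median_{\preceq^{*}}([2,2],[0,0])=[1,1]\prec^{*}[0,0]=Median_{\preceq^{*}}([0,0],[0,0])$, because $\phi([1,1])=[-1,-1]\prec_{Lex1}[0,0]$.

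Admissibility does not rescue the statement either. Compare intervals lexicographically by $\underline{u}+\overline{u}^{3}$ first and $\underline{u}$ second. This order is total, and it extends $\lator$ because the first key is strictly increasing in each endpoint. Then $[0,1]\prec[0.3,0.9]$, since the first keys are $1<1.029$. Pairing each with $[0,3]$, however, gives medians $[0,2]$ and $[0.15,1.95]$, whose first keys are $8$ and $7.564875$, so the order is reversed.

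The correct version of the result is therefore:
\begin{itemize}
\item for odd $n$, every total order, by your lemma;
\item for even $n$, under your extra hypothesis that the order is compatible with interval addition and positive scaling.
\end{itemize}
Compatibility holds for every $\preceq_{\alpha\beta}$ by linearity of $K_\alpha$ and $K_\beta$. This is the same fact the paper uses for $1$PM, and it means the table entries for $\lexunor$, $\lexdosor$ and $\preceq_{\alpha\beta}$ survive.

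One correction inside your even-case discussion: the ``first reduction'' is false as stated, because a single replacement can shift both central ranks. Under $\lexunor$, replacing $[1,1]$ by $[5,5]$ in $([1,1],[2,2],[3,3],[4,4])$ moves the central pair from $[2,2],[3,3]$ to $[3,3],[4,4]$. The reduction is also unnecessary. With compatibility, your lemma gives $c\preceq c'$ and $d\preceq d'$, hence $c+d\preceq c'+d\preceq c'+d'$. Scaling by $\frac{1}{2}$ then finishes the argument with no case analysis.
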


However, if we consider the epistemic point of view and we think of the intervals as ranges of possible values for the actual decision maker choice, the median should be the interval that contains all the possible medians and in this case it should have as bounds the median of the lower bounds of the original intervals and as upper bound the median of the upper bounds of the original intervals.  We then get the following definition of median.

\begin{definition}
  Given a set of intervals $\colec \in \LL$, we define its separable median as follows:
  $$
  Median^*(\colec)=\left[Me(\unda_1,\dots,\unda_n),Me(\ova_1,\dots,\ova_n)\right]
  $$
  where $Me$ denotes the median of a set of values in $\reales$.
\end{definition}

Observe that since $Me$ is monotone, it always holds that $Me(\unda_1,\dots,\unda_n) \le Me(\ova_1,\dots,\ova_n)$ and so, the function $Median$ is well defined. Observe also that this definition does not depend on any order defined in $\LL$, but only on the natural order in $\reales$. It orders the extremal points of the intervals to obtain the outcome. Therefore, its behaviour is appropriate for the lattice order.

\begin{proposition}
 $Median^*$ is a $\preceq_{Lo}$-average.
\end{proposition}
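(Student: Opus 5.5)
The plan is to observe that $Median^*$ is, by its definition, a separable function in the sense of Definition~\ref{def:separable}, with lower and upper component functions $f=g=Me$. The statement then reduces to properties of the real median $Me:\RR^n\to\RR$. Idempotency will come from Proposition~\ref{prop:idempotency_h<->fg} and $\lator$-monotonicity from Proposition~\ref{prop:monotonicity_lator}. Before that, we record that $Median^*$ genuinely maps $\LL^n$ into $\LL$: if $\unda_i\le\ova_i$ for every $i$, then $Me(\unda_1,\dots,\unda_n)\le Me(\ova_1,\dots,\ova_n)$, which is the monotonicity of $Me$ established below applied one coordinate at a time.

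Idempotency of $Me$ is immediate. If all inputs equal $x$, the central value (for $n$ odd) is $x$, and the midpoint of the two central values (for $n$ even) is $\frac{x+x}{2}=x$. Hence $Median^*([\unda,\ova],\dots,[\unda,\ova])=[\unda,\ova]$ by Proposition~\ref{prop:idempotency_h<->fg}.

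The main step is to prove that $Me$ is increasing in each variable. I would write $Me$ via order statistics $x_{(1)}\le\dots\le x_{(n)}$. Then $Me=x_{(\frac{n+1}{2})}$ for $n$ odd, and $Me=\frac{1}{2}\bigl(x_{(\frac{n}{2})}+x_{(\frac{n}{2}+1)}\bigr)$ for $n$ even. It therefore suffices to show that each order statistic $x_{(k)}$ is increasing in each coordinate $x_i$. For this I would use the characterization
\[
x_{(k)}=\min\{t\in\RR \mid \#\{j: x_j\le t\}\ge k\}.
\]
Replacing $x_i$ by some $y>x_i$ can only decrease the counting function $t\mapsto\#\{j:x_j\le t\}$ pointwise. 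Hence the minimal $t$ at which it reaches $k$ cannot decrease, so $x_{(k)}$ does not decrease. A sum of increasing functions is increasing, which gives the even case.

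With $Me$ increasing, Proposition~\ref{prop:monotonicity_lator} gives that $Median^*$ is $\lator$-increasing. Combined with idempotency, $Median^*$ is a $\lator$-average by Definition~\ref{def:intervalaverage}. The only step needing care is the monotonicity of order statistics; everything else is a direct invocation of earlier results.
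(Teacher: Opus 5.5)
Your proposal is correct and follows the same route as the paper. $Median^*$ is separable with both component functions equal to the real median $Me$, so idempotency and $\lator$-monotonicity reduce, via Propositions~\ref{prop:idempotency_h<->fg} and~\ref{prop:monotonicity_lator}, to the corresponding properties of $Me$; the only difference is that you prove $Me$ is increasing (through the monotonicity of order statistics), which the paper simply takes for granted.
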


\begin{proof}
Idempotency is evident in this case.

Moreover,  it is a separable function, where the two component functions are the median defined for numbers. Since this is a monotone function, it follows from Proposition~\ref{prop:monotonicity_lator} that the Median function for intervals is $\lator$-increasing.
\end{proof}

In addition, if the departing intervals are ordered according to the lattice order, then $Median_{\preceq_{Lo}}$ is in fact $Median^*$

\begin{lemma}\label{lem:eqMedianLo}
    If the intervals $a_1,\dots,a_n$ with $a_i \in \LL$ for all $i$, satisfy that there exists a permutation $\sigma$ of the $n$ indexes such that $a_{\sigma(1)}\lator a_{\sigma{2}} \lator \dots \lator a_{\sigma(n)}$. It holds that $Median_{\preceq_{Lo}}(a_1,\dots,a_n)=Median^*(a_1,\dots,a_n)$.
\end{lemma}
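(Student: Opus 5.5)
Since $\lator$ is not total, $Median_{\preceq_{Lo}}$ has to be read through the definition of the median for a total order, restricted to a family that forms a $\lator$-chain; that is, the ordering $a_{\sigma(1)}\lator\dots\lator a_{\sigma(n)}$ in the hypothesis. The proof then reduces to one observation: ordering the intervals along a $\lator$-chain simultaneously sorts their lower endpoints and their upper endpoints. Indeed, $a_{\sigma(k)}\lator a_{\sigma(k+1)}$ means exactly $\unda_{\sigma(k)}\le\unda_{\sigma(k+1)}$ and $\ova_{\sigma(k)}\le\ova_{\sigma(k+1)}$. Hence $\unda_{\sigma(1)}\le\dots\le\unda_{\sigma(n)}$ and $\ova_{\sigma(1)}\le\dots\le\ova_{\sigma(n)}$. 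So the same permutation $\sigma$ arranges both real tuples $(\unda_1,\dots,\unda_n)$ and $(\ova_1,\dots,\ova_n)$ in increasing order.

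Next I would recall that the real median $Me$ depends only on the ordered sample. For odd $n$ it is the value at position $\frac{n+1}{2}$. For even $n$ it is the midpoint of the values at positions $\frac n2$ and $\frac n2+1$. Ties in the real tuples cause no trouble, since any sorting permutation yields the same ordered values. Therefore $Me(\unda_1,\dots,\unda_n)$ equals $\unda_{\sigma(\frac{n+1}{2})}$ when $n$ is odd, and $\frac{\unda_{\sigma(n/2)}+\unda_{\sigma(n/2+1)}}{2}$ when $n$ is even. The same formulas hold for the upper endpoints with $\ova$ in place of $\unda$.

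I would then split into the two cases of the median definition. If $n$ is odd, $Median_{\preceq_{Lo}}(a_1,\dots,a_n)=a_{\sigma(\frac{n+1}{2})}=[\unda_{\sigma(\frac{n+1}{2})},\ova_{\sigma(\frac{n+1}{2})}]$, which is $[Me(\unda_i),Me(\ova_i)]=Median^*(a_1,\dots,a_n)$. If $n$ is even, the definition gives the interval whose endpoints are the averages of the lower and of the upper endpoints of $a_{\sigma(n/2)}$ and $a_{\sigma(n/2+1)}$, and these are again exactly $Me(\unda_i)$ and $Me(\ova_i)$.

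The only delicate point is well-definedness. If several intervals coincide, the chain permutation $\sigma$ is not unique, but equal intervals are interchangeable and the result does not depend on the choice. I would add a short remark to this effect and otherwise expect no real obstacle.
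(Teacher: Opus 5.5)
Your proposal is correct and follows the paper's argument: a $\lator$-chain sorts the lower and the upper endpoints simultaneously, so the central positions used by $Median_{\preceq_{Lo}}$ coincide with those used by $Me$ on each endpoint tuple. The paper writes out only the odd case; your treatment of the even case and your remark on the non-uniqueness of $\sigma$ fill in what it leaves implicit.
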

\begin{proof}
    Consider $n$ an odd number, then $Median_{\preceq_{Lo}}(a_1,\dots,a_n)=a_{\sigma(\frac{n+1}{2})}$. On the other hand, if all the intervals are ordered according to the lattice order, it holds that $\unda_{\sigma(1)} \le \dots \le \unda_{\sigma(n)}$ and $\ova_{\sigma(1)} \le \dots \le \ova_{\sigma(n)}$, whereas $Me(\unda_{\sigma(1)}, \dots, \unda_{\sigma(n)})=\unda_{\sigma(\frac{n+1}{2})}$ and $Me(\ova_{\sigma(1)}, \dots, \ova_{\sigma(n)})=\ova_{\sigma(\frac{n+1}{2})}$. It follows that also $Median^*(a_1,\dots,a_n)=(\unda_{\sigma(\frac{n+1}{2})},\ova_{\sigma(\frac{n+1}{2})})$
\end{proof}

However, this is not the case for the main examples of admissible orders.
 Since the median for real numbers is not strictly increasing, we cannot apply Proposition~\ref{prop:monotonicity_lexunor}. In fact, it is  not $\preab$-increasing for any $\alpha \in [0,1]$ with $\beta \neq \alpha$, as we can see in the following examples.

\begin{example}
\begin{itemize}
\item The case $\alpha=0$.

Observe that $[0.2,0.7] \lexunor [0.3,0.5]$, however,
$$
\begin{array}{rl}
Median^*([0,0.4],[0.2,0.7],[0.2,0.6]) & =[0.2,0.6] \not\lexunor [0.2,0.5]\\[.4cm]
 & =Median^*([0,0.4],[0.3,0.5],[0.2,0.6])\,.
 \end{array}
$$
Thus, the separable median is not $\lexunor$-increasing.

\item The case $\alpha=1$.
Analogously, the separable median is not $\lexdosor$-increasing. Observe that $[0.3,0.6]\lexdosor[0.2,0.7]$, but
$$
\begin{array}{rl}
Median^*([0,0.2],[0.3,0.6],[0.4,0.5]) & =[0.3,0.5] \not\lexdosor  [0.2,0.5]\\[.4cm]
& =Median^*([0,0.2],[0.2,0.7],[0.4,0.5])\,.
\end{array}
$$

\item The case  $\alpha \in (0,1)$ with $\beta \neq \alpha$.

We will consider that $\beta>\alpha$.
 Fix $\alpha \in (0,1)$ and take $x_1, x_2 \in [0,1]$ such that $x_1 < x_2$. As $x_2\le 1$, we have that $1-(1-\alpha)x_2 = (1-x_2)+\alpha x_2\ge \alpha x_2$  and so we can consider $y\in\mathbb{R}$ such that  $\alpha x_2 \le y \le 1-(1-\alpha)x_2$.

 Now take the intervals $a=[y-\alpha x_1, y + (1-\alpha)x_1]$ and $b=[y-\alpha x_2, y + (1-\alpha)x_2]$. Then
 $K_{\alpha}(a)=y=K_{\alpha}(b)$ and
$
K_{\beta}(a)=y+(\beta-\alpha)x_1<y+(\beta-\alpha)x_2=K_{\beta}(b)
$, which implies that $a \preab b$.

To simplify the notation, let   $a:= [\underline{a},\overline{a}] $ and $b:= [\underline{b},\overline{b}]$-
 Since $\alpha<1$ and $x_1<x_2$, we can consider an element $\epsilon$ in the interval $(0,(1-\alpha)(x_2-x_1))$.   Then,
$
Median([0,0],a,[\underline{a},\overline{a}+\epsilon])=a\,.
$ and
$
Median([0,0],b,[\underline{a},\overline{a}+\epsilon])=[\underline{b},\overline{a}+\epsilon]\,.
$

On the other hand, it holds that
$$
\begin{array}{ll}
K_{\alpha}([\underline{b},\overline{a}+\epsilon]) & = K_{\alpha}([y-\alpha x_2,y+ (1-\alpha)x_1 + \epsilon]) \\[.2cm] & = y-\alpha x_2 + \alpha (y+(1-\alpha)x_1 + \epsilon - y+\alpha x_2)\\[.2cm]
 & = y-\alpha x_2 + \alpha (x_1 + \alpha (x_2-x_1)+\epsilon) = y + \alpha (\epsilon-(1-\alpha)(x_2-x_1))
\end{array}
$$
And, since $\epsilon <(1-\alpha)(x_2-x_1)$, we have that
$y + \alpha (\epsilon-(1-\alpha)(x_2-x_1)) < y$.

Moreover,  $K_{\alpha}(a)=y$ and it follows that
$$
Median([0,0],b,[\underline{a},\overline{a}+\epsilon]) \prec_{\alpha\beta} Median([0,0],a,[\underline{a},\overline{a}+\epsilon])
$$
which implies that the separable median is not $\preab$-increasing for $\alpha< \beta$.

The counterexample for $\beta<\alpha$ is analogous and therefore not included here.
\end{itemize}

\end{example}

In the next tables we summarize the main  results obtained in this paper.

    \begin{tabular}{ccccc}
    \multicolumn{1}{c}{   } & \multicolumn{4}{c}{average with respect to the order}\\
             \hline
        Operator & $\lator$-average & $\lexunor$ & $\lexdosor$  &  \begin{tabular}{c}
            \\
               $\preceq_{\alpha\beta}$   \\
                 \scriptsize{$\alpha,\beta \in (0,1)$}
            \end{tabular} \\
        \hline\hline
       Min &   & \cmark & \cmark & \cmark  \\
        \hline
        Max &   & \cmark & \cmark& \cmark \\
\hline
        $\inflo$ & \cmark & \xmark & \xmark & \xmark  \\
        \hline
        $\suplo$ & \cmark & \xmark & \xmark& \xmark \\
        \hline\hline
             $mPM$, {$m \ge 3$, $m$ odd}
        & \cmark & \cmark & \cmark & \xmark \\
         \hline
             $mPM$,
            {$m$ even} & \xmark & \xmark & \xmark & \xmark  \\
        \hline
        Mean & \cmark & \cmark & \cmark & \cmark  \\
            \hline\hline
            Median &   & \cmark & \cmark & \cmark \\
            \hline
            Sep. Median & \cmark & \xmark & \xmark & \xmark \\
            \hline
    \end{tabular}

In the case of the mPM operator, depending on the value of m and the domain of definition, the result vary. If m is an odd number greater than 2 and the operator is defined over $\LL^n$, we get the next table.

\begin{center}
\begin{tabular}{cccc}
            Operator & $\lator$-average & $\preceq_{\alpha+}$-average & $\preceq_{\alpha-}$-average \\
            \hline
            $mPM$ & \cmark & \xmark & \xmark \\
            \hline
        \end{tabular}
\end{center}

If m is an even number greater than one and the operator is defined over $\LL^n$, we get the next table.

\begin{center}
\begin{tabular}{cccc}
            Operator & $\lator$-average & $\preceq_{\alpha+}$-average & $\preceq_{\alpha-}$-average \\
            \hline
            $mPM$ & \xmark & \xmark & \xmark \\
            \hline
        \end{tabular}
        \end{center}

On the other hand, if we restrict the operator to positive intervals, that is, if it is defined over $\LLposit^n$ its behaviour does not depend on the parity of $m$ and, for every $m\geq 2$ we get that

\begin{center}
\begin{tabular}{ccccc}
    \multicolumn{1}{c}{   } & \multicolumn{4}{c}{average with respect to the order}\\
             \hline
            \begin{tabular}{c}
                 Operator \\
                  over $\LLposit^n$
            \end{tabular}& $\lator$ & $\lexunor$ & $\lexdosor$ & \begin{tabular}{c}
      \\
               $\preceq_{\alpha\beta}$   \\
                 \scriptsize{$\alpha,\beta \in (0,1)$}
            \end{tabular}   \\
            \hline
$mPM, m\ge 2$ & \cmark & \cmark & \cmark & \xmark\\
            \bottomrule
        \end{tabular}
        \end{center}



\section{Application Example: Aggregation of Air Pollution Interval Data}\label{sec:example}


In this section we show a small example of application of the aggregation functions presented in the paper.

Suppose we want to know the concentration of a contaminant in a river, but we only have a sensor that measures with some imprecision. The sensor indicates that the concentration lies between 12 and 15 mg/L, due to its limited resolution and measurement noise. Here, the interval \([12, 15]\) mg/L does not reflect a real variability in the water concentration (which could be constant at the place and time of measurement), but rather our uncertainty about the exact contaminant value. The interval represents a lack of precise knowledge   that is, the true value lies somewhere inside the interval, but we do not know where.

In this example, we consider daily PM2.5 air pollution concentrations (measured in \(\mu g/m^3\)) represented as intervals to reflect measurement variability and environmental fluctuations. The data correspond to 10 consecutive days, with each day s pollution concentration expressed as an interval \([\unda_j, \ova_j]\), where \(\unda_j\) and \(\ova_j\) denote the lower and upper bounds, respectively.

\begin{table}[h!]
\centering
\begin{tabular}{ccc}
\hline
Day & Lower Bound \(\unda_j\) & Upper Bound \(\ova_j\) \\
\hline
1  & 12.1 & 15.3 \\
2  & 11.7 & 16.1 \\
3  & 13.5 & 17.2 \\
4  & 12.9 & 16.8 \\
5  & 13.2 & 15.7 \\
6  & 11.9 & 15.0 \\
7  & 12.8 & 16.4 \\
8  & 13.0 & 15.8 \\
9  & 12.3 & 16.0 \\
10 & 11.8 & 14.9 \\
\hline
\end{tabular}
\caption{Daily PM2.5 concentration intervals (\(\mu g/m^3\)) from a public dataset \cite{EPA_PM25_dataset}.}
\label{tab:pm25data}
\end{table}

Using these intervals, we compute various interval-valued aggregations to summarize the data, as follows:

\begin{itemize}
    \item \textbf{Arithmetic Mean:}
    \[
    \begin{array}{rl}
    1PM([\unda_1,\ova_1],\dots,[\unda_{10},\ova_{10}]) & = \left[ \displaystyle\frac{1}{10}\sum_{j=1}^{10} \unda_j, \quad \frac{1}{10}\displaystyle\sum_{j=1}^{10} \ova_j \right]\\[.4cm]
    & =\left[ \displaystyle\frac{123.2}{10}, \quad \displaystyle\frac{159.2}{10} \right] = [12.32, 15.92].
    \end{array}
    \]

    \item \textbf{Infimum (Optimistic aggregation):}
    \[
    \inflo([\unda_1,\ova_1],\dots,[\unda_{10},\ova_{10}])= [\min_j \unda_j, \min_j \ova_j] = [11.7, 14.9].
    \]

    \item \textbf{Supremum (Pesimistic aggregation):}
    \[
    \suplo([\unda_1,\ova_1],\dots,[\unda_{10},\ova_{10}])=[\max_j \unda_j, \max_j \ova_j] = [13.5, 17.2].
    \]

    \item \textbf{Median:}
    \[
     Median^*([\unda_1,\ova_1],\dots,[\unda_{10},\ova_{10}])=\left[\mathrm{median}(\unda_j), \mathrm{median}(\ova_j)\right] = [12.3, 15.85].
    \]

    \item \textbf{Root Mean Square (RMS):}
    \[
    \begin{array}{rl}
    2PM([\unda_1,\ova_1],\dots,[\unda_{10},\ova_{10}]) & =\left[\sqrt{\frac{1}{10}\displaystyle\sum_{j=1}^{10} \unda_j^2}, \enskip \sqrt{\frac{1}{10}\displaystyle\sum_{j=1}^{10} \ova_j^2}\right]\\[.5cm] & = \left[\displaystyle\sqrt{\frac{1527.34}{10}}, \enskip \displaystyle\sqrt{\frac{2559.91}{10}}\right]  = [12.38, 16.03].
    \end{array}
    \]
\end{itemize}

Here, the arithmetic mean provides a central tendency interval by averaging the lower and upper bounds separately. The infimum and supremum represent optimistic and pesimistic bounds, respectively, capturing the minimal and maximal observed pollution values across days. The median uses the central value of the sorted lower and upper bounds, offering a robust summary. Finally, the root mean square (RMS) aggregates the quadratic means of the bounds, giving more weight to larger values and reflecting variability in the data.

This example illustrates the practical use of interval aggregation methods in environmental data analysis, where uncertainty and variability naturally lead to interval-valued observations.

\section{Conclusion}\label{sec:conclusion}

In this paper, we have addressed the formal definition of average functions for interval data, extending classical aggregation theory to settings where information is inherently imprecise or uncertain. We have shown that the choice of order on the set of intervals plays a crucial role in determining the properties and applicability of interval aggregation operators. In particular, while the lattice order is the most intuitive, its lack of totality motivates the consideration of alternative total orders, each with its own implications for monotonicity and idempotency.

We have provided rigorous definitions of average functions in the interval context and have characterized the behavior of classical aggregation operators, such as power means, minimum, maximum, and median, when extended to intervals. Our analysis reveals that not all classical operators retain their desirable properties under all admissible orders, highlighting the importance of carefully selecting the order structure according to the application at hand.

The framework developed in this work lays the foundation for systematic aggregation of interval-valued information, with potential applications in statistics, decision making, and any field where data imprecision is present. Future research may focus on further generalizations, computational aspects, and the development of new aggregation operators tailored to specific types of interval data and practical requirements.


\section*{Funding}
The research reported on this contribution was funded by the Spanish Ministry of Science and Innovation through projects PID2022-139886NB-I00 and PID2023-146621OB-C21 and by the
Asturian Agency for Science, Business Competitiveness and Innovation (SEKUENS)
project
SEK-25-GRU-GIC-24-018. Susana Montes and Susana Diaz-Vazquez also acknowledge the funding received from the European Project SCIMIN-CRM - 101177746 - GAP-101177746.


\bibliography{biblio}

@inproceedings{Atanassov83,
                    title={Intuitionistic fuzzy sets},
                    author={Atanassov, K.},
                    booktitle={VIIth ITKR Session},
                    pages={1684--1697},
                    year={1983},
                    address={Sofia,Bulgaria},
                    organization={Deposited in the Central Science and Technology Library of the Bulgarian Academy of Sciences}
                  }

@book{beliakov2007aggregation,
  title={Aggregation functions: A guide for practitioners},
  author={Beliakov, Gleb and Pradera, Ana and Calvo, Tomasa and others},
  volume={221},
  year={2007},
address={Heidelberg},
  publisher={Springer}
}

@book{beliakov2016practical,
  title={A practical guide to averaging functions},
  author={Beliakov, Gleb and Bustince Sola, Humberto and Calvo S{\'a}nchez, Tomasa},
  volume={329},
  year={2016},
address={Switzerland},
  publisher={Springer}
}

@article{Bustince,
author = {H. Bustince and J. Fernandez and A. Kolesárová and R. Mesiar},
title = {Generation of linear orders for intervals by means of aggregation functions},
journal= {Fuzzy Sets and Systems 86},
volumen="220",
doi = {https://doi.org/10.1016/j.fss.2012.07.015},
pages="69-77",
year = {2013}
}

@book{cauchy1821,
  title={Cours d'analyse de l'{\'E}cole Royale Polytechnique},
  author={Cauchy, Augustin Louis Baron},
  volume={1},
  year={1821},
address={Paris},
  publisher={Imprimerie royale}
}

@article{COUSO20141502,
title = {Statistical reasoning with set-valued information: Ontic vs. epistemic views},
journal = {International Journal of Approximate Reasoning},
volume = {55},
number = {7},
pages = {1502-1518},
year = {2014},
note = {Special issue: Harnessing the information contained in low-quality data sources},
issn = {0888-613X},
doi = {https://doi.org/10.1016/j.ijar.2013.07.002},
url = {https://www.sciencedirect.com/science/article/pii/S0888613X13001618},
author = {Couso, Inés and Dubois, Didier}
}

@book{davey2002introduction,
  title={Introduction to Lattices and Order},
  author={Davey, B.A. and Priestley, H.A.},
  isbn={9780521784511},
  lccn={2001043910},
  series={Cambridge mathematical textbooks},
  doi={https://books.google.es/books?id=vVVTxeuiyvQC},
  year={2002},
address={Cambridge},
  publisher={Cambridge University Press}
}

@online{EPA_PM25_dataset,
  title     = {Air Quality System Data Mart},
  author    = {{United~States~Environmental~Protection~Agency}},
  year      = {2025},
  url       = {https://www.epa.gov/aqs},
  urldate   = {2025-06}
}

@article{Fishburn1987,
author = {P. Fishburn},
title = {Interval Ordenings},
journal= {Wiley, New York},
year = {1987}
}

@book{grabisch2009,
place={Cambridge},
series={Encyclopedia of Mathematics and its Applications},
title={Aggregation Functions},
DOI={10.1017/CBO9781139644150},
publisher={Cambridge University Press},
author={Grabisch, Michel and Marichal, Jean-Luc and Mesiar, Radko and Pap, Endre},
year={2009},
address={Cambridge},
collection={Encyclopedia of Mathematics and its Applications}}

@article{HroBriFil2017,
  title={Exploratory data analysis for interval compositional data},
  author={Hron, Karel and Brito, Paula and Filzmoser, Peter},
  journal={Advances in Data Analysis and Classification},
  volume={11},
  number={2},
  pages={223--241},
  year={2017},
  publisher={Springer}
}

@book{neggers1998basic,
  title={Basic Posets},
  author={Neggers, J. and Kim, H.S.},
  isbn={9789810235895},
  series={G - Reference,Information and Interdisciplinary Subjects Series},
  doi={https://books.google.es/books?id=-ip3-wejeR8C},
address={Singapore},
  year={1998},
  publisher={World Scientific}
}

@article{OliAzePacVal2022,
  title={Association measures for interval variables},
  author={Oliveira, M Ros{\'a}rio and Azeitona, Margarida and Pacheco, Ant{\'o}nio and Valadas, Rui},
  journal={Advances in Data Analysis and Classification},
  volume={16},
  number={3},
  pages={491--520},
  year={2022},
  publisher={Springer}
}

@book{taylor1999practical,
  title={Practical Foundations of Mathematics},
  author={Taylor, P.},
  number={v. 59},
  isbn={9780521631075},
  lccn={98039472},
  series={Cambridge Studies in Advanced Mathematics},
  doi={https://books.google.es/books?id=iSCqyNgzamcC},
  year={1999},
address={Cambridge},
  publisher={Cambridge University Press}
}

@article{Xu2006,
author = {Xu, Z.  and  Yager,R.~R.},
title = {Some geometric aggregation operators based on intuitionistic fuzzy sets},
journal = {International Journal of General Systems},
volume = {35},
number = {4},
pages = {417-433},
year  = {2006},
publisher = {Taylor \& Francis},
doi = {10.1080/03081070600574353},
URL = {https://doi.org/10.1080/03081070600574353}
}

@article{archibald,
  title={Mathematics before the Greeks},
  author={Archibald, Raymond Clare},
  journal={Science},
  volume={71},
  number={1831},
  pages={109--121},
  year={1930},
  publisher={American Association for the Advancement of Science}
}

@article{zadeh1965fuzzy,
  title={Fuzzy sets},
  author={Zadeh, Lotfi A},
  journal={Information and Control},
  year={1965}
}

@article{yager2004owa,
  author    = {Ronald R. Yager},
  title     = {OWA aggregation over a continuous interval argument with applications to decision making},
  journal   = {IEEE Transactions on Systems, Man, and Cybernetics. Part B (Cybernetics)},
  volume    = {34},
  number    = {5},
  pages     = {1952--1963},
  year      = {2004},
}

@book{moore1966,
  author    = {Ramon E. Moore},
  title     = {Interval Analysis},
  year      = {1966},
  publisher = {Prentice-Hall},
  address   = {Englewood Cliffs, NJ},
}

@book{dubois1988,
  author    = {Didier Dubois and Henri Prade},
  title     = {Possibility Theory: An Approach to Computerized Processing of Uncertainty},
  year      = {1988},
  publisher = {Plenum Press},
  address   = {New York},
}

@phdthesis{sambuc1975,
  author       = {Jean-Pierre Sambuc},
  title        = {Fonctions $\phi$-floues. Application à l'aide au diagnostic en pathologie thyroidienne},
  year         = {1975},
  school       = {Université de Marseille},
  note         = {Thèse de troisième cycle},
  address      = {Marseille, France}
}

@article{diazvazquez2024new,
  title={A new family of aggregation functions for intervals},
  author={Diaz-Vazquez, Susana and Torres-Manzanera, Emilio and Rico, Noelia and Mesiar, Radko and Rodriguez-Martinez, Iosu and Lafuente, Julio and Diaz, Irene and Montes, Susana and Bustince, Humberto},
  journal={Computational and Applied Mathematics},
  doi={https://doi.org/10.1007/s40314-023-02525-1},
  volume={43},
  number={1},
  pages={17},
  year={2024},
  publisher={Springer}
}

@article{fumanal2021,
  title={Interval-valued aggregation functions based on moderate deviations applied to motor-imagery-based brain--computer interface},
  author={Fumanal-Idocin, Javier and Tak{\'a}{\v{c}}, Zdenko and Fern{\'a}ndez, Javier and Sanz, Jos{\'e} Antonio and Goyena, Harkaitz and Lin, Ching-Teng and Wang, Yu-Kai and Bustince, Humberto},
  journal={IEEE Transactions on Fuzzy Systems},
  volume={30},
  number={7},
  pages={2706--2720},
  year={2021},
doi={https://doi.org/10.1109/TFUZZ.2021.3092824},
  publisher={IEEE}
}

@article{WilkinBeliakov2022,
  title={Robust aggregation of compositional and interval-valued data: The mode on the unit simplex},
  author={Wilkin, Tim and Beliakov, Gleb},
  journal={Fuzzy Sets and Systems},
  volume={446},
  pages={124--143},
  year={2022},
doi = {https://doi.org/10.1016/j.fss.2021.01.007},
  publisher={Elsevier}
}

@article{ZhangElastic2021,
author = {Honggang Zhang and Jingyong Su and Linlin Tang and Anuj Srivastava},
title = {Elastic statistical analysis of interval-valued time series},
journal = {Journal of Applied Statistics},
volume = {50},
number = {1},
pages = {60--85},
year = {2023},
publisher = {Taylor \& Francis},
doi = {10.1080/02664763.2021.1981257},
note ={PMID: 36530776},
URL = {https://doi.org/10.1080/02664763.2021.1981257},
eprint = {https://doi.org/10.1080/02664763.2021.1981257}
}

@article{sun2016,
  title={Linear regression with interval-valued data},
  author={Sun, Yan},
  journal={Wiley Interdisciplinary Reviews: Computational Statistics},
  volume={8},
  number={1},
  pages={54--60},
  year={2016},
  publisher={Wiley Online Library}
}

\end{document}